\documentclass[12pt,a4paper, reqno]{amsart}
\usepackage{amsmath,amsfonts,amssymb,mathrsfs}
\usepackage{mathtools}
\usepackage{comment}
\usepackage{xfrac}
\usepackage{subfigure}
\usepackage{bbm}
\usepackage[linktocpage=true,colorlinks=true,linkcolor=blue,citecolor=red,urlcolor=magenta,pdfborder={0 0 0}]{hyperref}
\usepackage{fancyhdr}
\usepackage{mathscinet}
\usepackage{dutchcal}
\usepackage{xcolor}
\bibliographystyle{siam}
\usepackage[english]{babel}
\usepackage{mathscinet}

\usepackage{inputenc}[UTF-8]
\usepackage{ulem}

\definecolor{forestgreen}{rgb}{0.13, 0.55, 0.13}
\definecolor{anna}{rgb}{0.01, 0.28, 1.0}

\newtheorem{theorem}{\bf Theorem}[section]
\newtheorem{lemma}[theorem]{\bf Lemma}

\newtheorem{definition}[theorem]{\bf Definition}

\newtheorem{proposition}[theorem]{\bf Proposition}


\newcommand{\R}{\mathbb{R}}

\newcommand{\KK}{\mathbb{K}}

\newcommand{\W}{\mathcal{W}}

\def \dd {{\rm d}}

\def \KK {{\mathbb{K}}}

\def \L {\mathscr{L}}
\def \K {\mathscr{K}}

\def \A {\mathscr{A}}

\def \epsilon {{\varepsilon}}

\def \O {{\Omega}}


\def\p{\partial}
\def \tilde {\widetilde}

\usepackage{mathtools}

\textwidth = 16cm 
\textheight = 21.5cm 
\hoffset=-1cm
\voffset=-.5cm

\usepackage{chngcntr}

\counterwithin*{equation}{section}
\counterwithin*{theorem}{section}

\usepackage{appendix}
\usepackage{apptools}
\AtAppendix{\counterwithin{theorem}{section}}

\begin{document}
	\title[]{On the obstacle problem associated to the Kolmogorov-Fokker-Planck operator
		  with rough coefficients}
	
	\author{Francesca Anceschi}
	\address{Dipartimento di Ingegneria Industriale e Scienze Matematiche -
		Università Politecnica delle Marche: via Brecce Bianche 12 - 60131 Ancona, Italy}
	\email{f.anceschi@staff.univpm.it}
	
	\author{Annalaura Rebucci}
	\address{Dipartimento di Scienze Fisiche, Informatiche e Matematiche - 
	Università degli Studi di Modena e Reggio Emilia: via Campi 213/b - 41115 Modena, Italy}
	\email{annalaura.rebucci@unipr.it}
	
	\thanks{\textbf{Aknowledgments:}
	The authors are supported by the INdAM - GNAMPA project 
	“Variational problems for Kolmogorov equations: long-time analysis and regularity estimates”, CUPE55F22000270001.}
	
	\date{\today}

	\begin{abstract}
		\noindent
		This work is devoted to the study of the obstacle problem associated to the Kolmogorov-Fokker-Planck operator
		  with rough coefficients through a variational approach. In particular, after the introduction of a proper anisotropic Sobolev space and related properties,
		 we prove the existence and uniqueness of a weak solution for the obstacle problem by adapting a classical perturbation argument to the convex functional
		associated to the case of our interest. 
		Finally, we conclude this work by providing a one-sided associated variational inequality, alongside with an overview on related 
		open problems.
				
		\medskip 
		\noindent
		{\bf Key words: Kolmogorov equation, weak regularity theory, ultraparabolic, obstacle problem}	
		
		\medskip
		\noindent	
		{\bf AMS subject classifications: 35Q84 - 35A15 - 47J20 - 35D30 -  35R03 }
	\end{abstract}
	
	\maketitle
	
	\hypersetup{bookmarksdepth=2}
	\setcounter{tocdepth}{1}
	
	\tableofcontents

	\section{Introduction}
	\label{intro}
	This work is devoted to the study of the obstacle problem associated to the following 
	Kolmogorov-Fokker-Planck operator
	\begin{align} \label{defL}
		\L u (v,x,t)  
		&= \nabla_v \cdot \left( A(v,x,t) \nabla_v u (v,x,t) \right) + v \cdot \nabla_x u(v,x,t) - \p_t u(v,x,t)\\ \nonumber
		&=: \nabla_v \cdot \left( A(v,x,t) \nabla_v u (v,x,t) \right)  + Y u(v,x,t),  
	\end{align}
	where $(v,x,t) \in \R^{2d+1}$ and the diffusion matrix $A$ satisfies the following assumption:
	\begin{itemize}
		\item[(H)] $A(v,x,t) = \left( a_{ij}(v,x,t) \right)_{ij}$ is a {bounded} $d \times d$ matrix made of real measurable coefficients and such that
				$a_{ij}=a_{ji}$ for every $i,j=1, \ldots, d$. Moreover, there exists two positive constants $\lambda$ and
				$\Lambda$ such that 
				\begin{equation}\label{unifell}
					\lambda |\xi|^2 \le  (A \xi ) \cdot \xi  \le \Lambda | \xi|^2 
					\qquad \forall \xi \in \R^d.
				\end{equation}
	\end{itemize} 
	Assumption (H) is usually referred to as ellipticity condition. In particular, the left-hand side of inequality \eqref{unifell}
	ensures coercivity of the functionals $J$ and $\mathscr{J}$ defined below in \eqref{functional} and \eqref{Jcors}, respectively, providing us with a fundamental
	ingredient for our proof. 
	Moreover, it is responsible of the degenerate nature of the Kolmogorov-Fokker-Planck operator $\L$, 
	for which the diffusion only happens in the velocity variables, i.e. the first $d$ directions. 

	\subsection{Motivation and background} The Kolmogorov-Fokker-Planck operator $\K$ with constant coefficient
	\begin{align*}
		\K u (v,x,t) = \Delta_v u(v,x,t) + v \cdot \nabla_x u(v,x,t) - \p_t u(v,x,t), \qquad (v,x,t) \in \R^{2d+1},
	\end{align*}
	was firstly introduced by Kolmogorov in \cite{K} as a fundamental ingredient for the study of the density of $d$ particles of gas 
	in the phase space. Later on, $\K$ was considered by H\"ormander in \cite{H} as a prototype for the family of hypoelliptic operators
	 of type $2$, i.e. the ones which can be written as a sum of squares plus a drift term 
	$\sum_{i=1}^{d} X_i^2 + X_0$, where $X_i$ is a smooth vector field for every $i =0, \ldots, d$, and for this reason we usually refer to 
	\begin{equation*}
		Yu(v,x,t) := v \cdot \nabla_x u(v,x,t) - \p_t u(v,x,t), \qquad (v,x,t) \in \R^{2d+1},
	\end{equation*} 
	as drift, or transport term. 
	This immediately suggested that $\K$ is a hypoelliptic operator because it satisfies H\"ormander's rank condition, meaning every solution to $\K u = f$ on a bounded open domain 
	$\O \subset \R^{2d+1}$ is smooth whenever $f \in C^{\infty}(\O)$. Thus, $\K$ possesses strong regularizing properties.

	As it will be clear in the following, operator $\L$ arises in various applications starting from the kinetic theory of gas, f. i. 
	\cite{Villani}, and the financial market modeling, f.i. \cite{PascucciBook, AMP}. On the other hand, from the purely analytical point of view, $\L$ is a 
	prototype for the family of second order ultraprabolic partial differential operators of Kolmogorov type defined as
	\begin{align} \label{ultra}
		\sum_{i,j=1}^{m_0}\partial_{x_i}\left(a_{ij}(x,t)\partial_{x_j}u(x,t)\right)+\sum_{i,j=1}^N b_{ij}x_j\partial_{x_i}u(x,t)-\partial_t u(x,t), 
		\qquad (x,t) \in \R^{N+1},
	\end{align}
	on Lie groups, see f. i. \cite{LP}. On one hand, when the matrix $A$ is made of constant, or H\"older continuous coefficients, 
	we deal with classical solutions. In this setting, Schauder estimates \cite{Manfredini, PRS}, well-posedness results for the Dirichlet \cite{Manfredini} and 
	the Cauchy problem \cite{PDF}, alongside with other results were proved by many authors over the years. For further information on this subject we refer 
	the reader to \cite{AP-survey} and the references therein. 
	
	On the other hand, when the matrix $A$ is made of measurable coefficients we have to tackle with 
	weak solutions. The extension of the De Giorgi-Nash-Moser regularity theory to this setting had been an open problem for decades recently resolved
	both in the kinetic and ultraparabolic setting, 
	with various contributions among which we recall \cite{GIMV, GI, GM} and \cite{AR-harnack}, respectively. 
	As far as we are concerned with well-posedness results for boundary value problems in the weak setting,
	there are recent results regarding existence and uniqueness of the solution for the Dirichlet problem \cite{LitsgardNystrom}, 
	existence of a weak fundamental solution for the weak Cauchy problem \cite{AR-funsol},
	and finally $C^{\alpha}$ regularity estimates up to the boundary \cite{Zhu, Silvestre}.
	It is in this weak framework that we aim to address the study of the weak obstacle problem by means of variational methods, a topic 
	which presents many interesting open problems that we discuss in Section \ref{var-ineq}.

	\medskip

Obstacle problems are not only fascinating for theoretical purposes, but also for their
applications in research areas as diverse as physics, biology and stochastic theory. 
We here focus on their connection to mathematical finance and in particular to the problem of determining the arbitrage-free price of American-style options. To be more precise, we consider a financial model where the time evolution of the state variables is described by the $2d$-dimensional diffusion process $X=\left(V_t^x,X_t^x \right)$ solving the stochastic differential equation
\begin{equation}\label{langevin}
\begin{cases}
    & d V_t^x=  \sqrt{2}\,d W_{t}, \\
	& d X_t^x = V_t^x	\, dt, 
\end{cases} \quad \textit{{\rm and}} \quad X_{t_0}^{t_0,x}=x,
\end{equation}
where $(x,t_0)\in \R^{2d}\times [0,T]$ and $W_t$ denotes a $d$-dimensional Wiener process. 

We recall that an American option with pay-off $\psi$ is a contract which grants the holder the right to receive the payment of the sum $\psi(X_t)$ at a time $t \in [0,T]$, which is chosen by the holder. Then, in virtue of the classical arbitrage theory (see, for instance {\cite{PascucciBook}}), the fair price at the initial time $0$ of the American option, assuming the risk-free interest rate is zero, is given by the following optimal stopping problem
\begin{equation}\label{optimal}
u(x,t)=\sup_{\tau \in [t,T]}E \left[ \psi \left( X_\tau^{t,x} \right) \right],
\end{equation}
where the supremum is taken over all stopping times $\tau \in [t,T]$ of $X$. In \cite{Pascucci}, it is proved that the function $u$ in \eqref{optimal} is a solution to the obstacle problem
\begin{align}
		\label{obstaclePP}
		\begin{cases}
			\max \lbrace  \L u -f, \psi -u\rbrace \qquad &(v,x,t) \in \R^{2d} \times [0,T] \\
			u(v,x,t) = g  \qquad &(v,x,t) \in \R^{2d} \times \lbrace 0 \rbrace,
		\end{cases}
	\end{align}
where the obstacle $\psi$ corresponds to the pay-off of the option and it is a Lipschitz continuous function in $\overline{\O}$ satisfying the following condition: there exists a constant $c \in \R$ such that
\begin{equation}\label{cond-obstacle}
\sum_{i,j=1}^d \xi_i \xi_j \partial_{x_i x_j}\psi \geq c|\xi|^2, \quad \textit{in $\O$}, \xi \in \R^d
\end{equation}
in the distributional sense. 

In the uniformly parabolic case, i.e. when $d=N$, the evaluation of American options was studied starting from the article \cite{Ben}, where a probabilistic approach was employed, and later on developed in the paper \cite{Jailler} considering a variational approach.

Furthermore, the investigation of problem \eqref{obstaclePP} is motivated by the fact that there are significant classes of American options whose corresponding diffusion process $X$ is associated to Kolmogorov-type operators which are not uniformly parabolic and are of the kind \eqref{defL}. Two such examples are given by American style options (c.f. \cite{Barucci}) and by the American options priced in the stochastic volatility introduced in the article \cite{Hobson}.

In virtue of its importance in finance, the mathematical study of the obstacle problem \eqref{obstaclePP} was already initiated in the papers \cite{obstacle-ex, obstacle-2, obstacle-3}. Specifically, the main result of \cite{obstacle-ex} is the existence of a strong solution to problem \eqref{obstaclePP} in certain bounded cylindrical domains and in the strips $\R^{2d} \times [0,T]$ through the adaptation of a classical penalization technique. On the other hand, the main purpose of papers \cite{obstacle-2, obstacle-3} is to prove some new regularity results for solutions to \eqref{obstaclePP}. In particular, \cite{obstacle-2} concerns the optimal interior regularity for solutions to the problem \eqref{obstaclePP}, while \cite{obstacle-3} contains new results regarding the regularity near the initial state for solutions to the Cauchy-Dirichlet problem and to \eqref{obstaclePP}. 

However, in the aforementioned papers \cite{obstacle-ex, obstacle-2, obstacle-3}, the authors could only deal with strong solutions and continuous obstacles satisfying condition \eqref{cond-obstacle}. For this reason, the main purpose of this paper is to improve the results contained in \cite{obstacle-ex, obstacle-2, obstacle-3, Pascucci} by studying the obstacle problem \eqref{obstaclePP} in a more general and natural setting, i.e. by considering weak solutions to \eqref{defL} in the functional space $\W$. Furthermore, in a standard manner (see 
	\cite[Chapter 6]{KinderlehrerStampacchia}), we assume that the obstacle $\psi$ and the boundary data $g$ inherit the same regularity of the function $u$, namely $\psi \in \W(\O_v \times \O_{xt})$ and $g \in \W(\O_v \times \O_{xt})$. In comparison with \cite{obstacle-ex, obstacle-2, obstacle-3, Pascucci}, we also weaken the regularity assumptions on the right-hand side by considering  $f \in L^2(\O_{xt}, H^{-1}(\O_v))$ and by considering the following more general obstacle problem
	\begin{align}
		\label{obstacle}
		\begin{cases}
			\L u(v,x,t) = f(v,x,t) \qquad &(v,x,t) \in \O \\
			u(v,x,t) \ge \psi(v,x,t)  \qquad &(v,x,t) \in \O \\
			u(v,x,t) = g  \qquad &(v,x,t) \in \p_K \O,
		\end{cases}
	\end{align}
	where the boundary condition needs to be considered as attained in the sense of traces, the obstacle condition holds in $\W(\O_v \times \O_{xt})$
	and 
	\begin{equation} \label{ordering-ob}
		\psi \le g \, \, \, \text{on } \p_K (\O_{v} \times \O_{xt})
		\quad \text{in } \, \, \W(\O_v \times \O_{xt}),
	\end{equation}
an ordering relation whose meaning will be clarified in Section \ref{preliminaries}.
Another motivation behind our studies is to pursue the variational analysis of Kolmogorov-type operators. In particular, with this work we aim at initiating the study of the obstacle problem \eqref{obstacle} in the framework of Calculus of Variations, in order to take advantage of the rich toolbox provided by such theory when it comes to investigating weak solutions and less regular obstacles. 

A first step towards this direction was already taken in \cite{AAMN, AR-harnack, LitsgardNystrom}, where the natural functional setting for the variational study of degenerate Kolmogorov equations was identified and subsequently characterized. This leads back to finding the right variational formulation and the right functional associated to Kolmogorov-type equations. More precisely, following \cite{AAMN}, we rewrite the problem of finding a solution to \eqref{obstacle} as that of finding a null minimizer of the functional
\begin{align}
		\label{functional-intro}
		 \inf \left\{  \,\, \iiint \limits_{\O_v \times \O_{xt}} \frac12 
		\left( A \left( \nabla_v u - \mathbb{j} \right) \right) \cdot
					\left( \nabla_v u - \mathbb{j} \right) \, \dd v \, \dd x \, \dd t \, : \, \nabla_v \cdot \left( A \mathbb{j} \right) = f - Yu \right\}.
	\end{align}
It is clear that the infimum in \eqref{functional-intro} is non-negative	and that, given a solution $u$ to \eqref{obstacle}, if we choose $\mathbb{j}=\nabla_v u$, then \eqref{functional-intro} vanishes at $u$. Moreover, it is easy to show that the functional in \eqref{functional-intro} is uniformly convex and attains its minimum at zero. Finally, we remark that the functional justifies the definition of functional kinetic space given in \eqref{kin-space}.

\subsection{Functional setting}
In order to present our main results, we first need to introduce some further notation regarding both the domain and
	the functional setting we are considering from now on in this work. First of all, let $\O := \O_v \times \O_{xt}$ be a subset of $\R^{2d+1}$ such that 
	\begin{itemize}
	\item[(D)] $\O_v \subset \R^d$ is a bounded Lipschitz domain 
	and $\O_{xt} \subset \R^{d+1}$ is a bounded domain with 
	$C^{1,1}$-boundary, i.e. $C^{1,1}$-smooth with respect to the transport operator $Y$ as well as $t$.
	\end{itemize}
	Then, if we denote by $N$ the outer normal to $\O_{xt}$, we are able to classically define the 
	Kolmogorov boundary of the set $\Omega$ as
	\begin{equation} \label{boundary-k}
		\p_K ( \O_v  \times \O_{xt} ) := 
		\left( \p \O_{v} \times \O_{xt} \right) \cup 
		\left\{ (v,x,t) \in \overline \O_v \times \p \O_{xt} \, | \, \, 
		(v, -1) \cdot N_{xt} > 0 \right\},
	\end{equation}
	which serves in the context of the operator $\L$ as the natural hypoelliptic counterpart of the parabolic boundary considered in the context of Cauchy-Dirichlet problems for uniformly parabolic equations. 
	Notice that the Kolmogorov boundary is well defined on the domain $\O_v  \times \O_{xt} $,
	since we assume enough regularity on the boundary of $\O_{xt}$ to ensure the existence of the
	normal $N_{xt}$. 

	Secondly, let
	us denote by $H^1(\O_v)$ the Sobolev space in the velocity variable, i.e. the space of 
	functions whose distributional gradient in $\O_v$ lies in $\left( L^2 (\O_v) \right)^d$ that is
	\begin{align}\label{kin-space}
		H^1(\O_v) := \left\{ h \in L^2(\O_v) : \, \, \nabla_v h \in \left( L^2 (\O_v) \right)^d 
		\right\}.
	\end{align}
	Moreover, we set the norm associated to the space $H^1(\O_v)$ as
	\begin{align*}
		\| h \|_{H^1 (\O_v) } := \| h \|_{L^2(\O_v)} + \| \nabla_v h \|_{L^2(\O_v)},
		\qquad \forall h \in H^1(\O_v),
	\end{align*}	
	where by abuse of notation $ \| \nabla_v h \|_{L^2(\O_v)}$ is the vectorial norm of the gradient. 
	Then, following the classical approach, we let $H^1_c(\O_v)$ denote the closure of 
	$C_c^\infty (\O_v)$ in the norm of $H^1(\O_v)$ and we recall that $C^\infty_c(\overline \O_v)$
	is dense in $H^1(\O_v)$ given that $\O_v$ is a Lipschitz domain by assumption. 
	This means we can define equivalently $H^1(\O_v)$ as the closure of $C^\infty_c (\overline 
	\O_v)$ in the norm $\| \cdot \|_{H^1(\O_v)}$. 	
	Since $H^1_c(\O_v)$ is a Hilbert space, then it is reflexive, meaning 
	$$
		\left( H^1_c (\O_v) \right)^* = H^{-1} (\O_v) 
		\quad \text{and} \quad
		\left( H^{-1} (\O_v) \right)^* = H^{1}_c (\O_v) ,
	$$
	where $( \cdot)^*$ denotes the dual of the space. Hence, from now on we denote by $H^{-1}
	(\O_v)$ the dual to $H^1_c(\O_v)$ acting on functions in $H^1_c(\O_v)$ through the 
	duality pairing 
	$$\langle \, \cdot \, | \, \cdot \, \rangle := 
	\langle \, \cdot \, | \, \cdot \, \rangle_{H^{-1}(\O_v), H^{1}_c(\O_v)}.
$$
	Now, following the approach proposed in \cite{LitsgardNystrom}, we define the Kolmogorov Sobolev space
	$\W(\O_v \times \O_{xt})$ as the closure of $C^{\infty}(\overline{\O_{xt} \times \O_{v}})$ in the norm
	\begin{align*}
		\| w \|_{\W( \O_{xt} \times \O_{v})} 
		&:=
		\| w \|_{L^2( \O_{xt}, H^1(\O_{v}))} + 
		\| Y u \|_{L^2(\O_{xt},H^{-1}(\O_v)} \\
		&:= \left( \iint_{\O_{xt}} \| w ( \cdot, x,t) \|_{H^1(\O_v)}^2 \dd x \, \dd t \right)^{\frac12} 
		+ \left( \iint \limits_{\O_{xt}} \| Y u (\cdot, x, t) \|^2_{H^{-1}(\O_v)} \dd x \, \dd t \right)^{\frac12}. 
	\end{align*}
We recall that in \cite{AAMN} the authors prove that $C^\infty_c(\overline{\O_{xt} \times \O_v})$ is 
	dense in the space 
	\begin{align*}
		\W(\Omega_{xt} \times \O_v) = 
		\left\{ w \in L^2 (\O_{xt}, H^1 (\O_v) ): \, Y w \in L^2(\O_{xt},H^{-1}(\O_v)) \right\}.
	\end{align*}
	Hence, we could also consider this equivalent definition for the Kolmogorov Sobolev space.
	Lastly, we consider that $L^2(\O_{xt}, H^{-1}(\O_v))$ is such that	
	\begin{align*}
		L^2(\O_{xt}, H^{-1}(\O_v)) = \left( L^2(\O_{xt}, H^{1}_c(\O_v))  \right)^*.
	\end{align*}
Furthermore, if we consider again $\O_{xt} \times \O_v$ under the assumption (D), then we denote by 
	$\p ( \O_{xt} \times \O_v )$ its topological boundary and by $\p_K ( \O_{xt} \times \O_v )$ its 
	Kolmogorov boundary as defined in \eqref{boundary-k}, where $\p_K ( \O_{xt} \times \O_v )
	\subset \p ( \O_{xt} \times \O_v )$. Then we denote by 
	$C_{0,K}^{\infty} (\overline{ \O_{xt} \times \O_v })$ the set of functions 
	$C^{\infty} (\overline{ \O_{xt} \times \O_v })$ vanishing on $\p_K (\O_{xt} \times \O_v )$.
	Additionally, we denote by $\W_0(\O_{xt} \times \O_v )$ the closure in the norm 
	$\W(\O_{xt} \times \O_v )$ of $C_{0,K}^{\infty} (\overline{ \O_{xt} \times \O_v })$. 
	
We conclude this subsection recalling the following Poincar\`{e} inequality for functions in $L^2( \O_{xt}, H^1_{c}(\O_{v}))$ (see \cite[Lemma 2.2]{LitsgardNystrom}).
\begin{lemma}\label{lemmapoinc}
There exists a constant $1 \leq C<+\infty$, which depends only on $d$ and $\O_v$, such that 
\begin{equation*}
\Vert u \Vert_{L^2( \O_{xt}, L^2(\O_{v}))} \leq C \Vert\nabla_v u \Vert_{L^2( \O_{xt}, L^2(\O_{v}))}
\end{equation*}
for every $u \in L^2( \O_{xt}, H^1_{c}(\O_{v}))$.
\end{lemma}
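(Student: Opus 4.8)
The plan is to reduce Lemma~\ref{lemmapoinc} to the classical Poincar\'e inequality on the bounded Lipschitz domain $\O_v$, applied on almost every fiber $\{(x,t)\}$, and then integrate in the $(x,t)$ variables. First I would recall that, since $\O_v \subset \R^d$ is bounded (a bounded Lipschitz domain being more than enough), the standard Poincar\'e inequality holds on $H^1_c(\O_v)$, which by definition is the closure of $C^\infty_c(\O_v)$ in $H^1(\O_v)$, i.e. $H^1_c(\O_v) = H^1_0(\O_v)$: there is a constant $C_0 = C_0(d,\O_v)$ such that
\begin{equation*}
\| h \|_{L^2(\O_v)} \le C_0 \, \| \nabla_v h \|_{L^2(\O_v)} \qquad \text{for every } h \in H^1_c(\O_v).
\end{equation*}
This can be obtained, for instance, by enclosing $\O_v$ in a slab $\{ |v_1| < R \}$ with $R$ comparable to $\operatorname{diam}(\O_v)$ and integrating $|h(v)|^2$ along the $v_1$-direction after extending $h$ by zero, or by the usual compactness--contradiction argument; either way the constant depends only on $d$ and $\O_v$. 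Replacing $C_0$ by $\max\{C_0,1\}$ if necessary, we may and do assume $C_0 \ge 1$.

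Next, let $u \in L^2(\O_{xt}, H^1_c(\O_v))$. By the definition of the Bochner space, the map $(x,t) \mapsto u(\cdot,x,t)$ is strongly measurable with values in the (closed) subspace $H^1_c(\O_v) \subset H^1(\O_v)$; hence for almost every $(x,t) \in \O_{xt}$ the slice $u(\cdot,x,t)$ belongs to $H^1_c(\O_v)$, the scalar functions $(x,t) \mapsto \| u(\cdot,x,t) \|_{L^2(\O_v)}^2$ and $(x,t) \mapsto \| \nabla_v u(\cdot,x,t) \|_{L^2(\O_v)}^2$ are measurable and integrable over $\O_{xt}$, and
\begin{equation*}
\| u \|_{L^2(\O_{xt}, L^2(\O_v))}^2 = \iint_{\O_{xt}} \| u(\cdot,x,t) \|_{L^2(\O_v)}^2 \, \dd x \, \dd t,
\end{equation*}
and similarly with $\nabla_v u$ in place of $u$.

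Then I would apply the fiberwise inequality: for a.e. $(x,t) \in \O_{xt}$,
\begin{equation*}
\| u(\cdot,x,t) \|_{L^2(\O_v)}^2 \le C_0^2 \, \| \nabla_v u(\cdot,x,t) \|_{L^2(\O_v)}^2 ,
\end{equation*}
and integrate over $\O_{xt}$ to get $\| u \|_{L^2(\O_{xt}, L^2(\O_v))}^2 \le C_0^2 \, \| \nabla_v u \|_{L^2(\O_{xt}, L^2(\O_v))}^2$; taking square roots and setting $C := C_0 \ge 1$ yields the claim with a constant depending only on $d$ and $\O_v$. There is essentially no obstacle here: the only points requiring a little care are the Bochner-space measurability/Fubini bookkeeping used to split the norm into a fiberwise integral, and checking that the classical Poincar\'e constant genuinely depends only on the stated data. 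Alternatively, one could first prove the inequality for $u$ in the dense class of smooth functions vanishing near $\p \O_v$ and pass to the limit in the $\W$-norm, but the direct fiberwise argument is cleaner.
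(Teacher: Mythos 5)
The paper does not actually prove Lemma~\ref{lemmapoinc}; it simply cites \cite[Lemma 2.2]{LitsgardNystrom}. Your argument --- apply the classical Poincar\'e inequality on $H^1_c(\O_v)=H^1_0(\O_v)$ fiberwise in $(x,t)$, then integrate over $\O_{xt}$ using the Bochner-norm identity --- is correct and is precisely the standard way to establish this estimate, matching the cited source in spirit.
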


	\subsection{Main results}
As previously pointed out, our aim is to study the well-posedness theory for weak solutions {to the obstacle problem \eqref{obstacle}}
	under assumption (H) on a domain $\Omega$ satisfying assumption (D).
	Given the notation introduced in the previous subsection, we are in a position to properly formalize the definition of weak solution
	to \eqref{obstacle} considered in this work. 
	\begin{definition}
		\label{weak-sol}
		We say $u$ is a weak solution to \eqref{obstacle} if
		\begin{equation*}
			u \in \W (\O_{v} \times \O_{xt}) , \quad 
			u \in g + \W_0(\O_{v} \times \O_{xt}),
			\quad u \ge \psi \, \, \, \text{in } \,\, \W(\O_v \times \O_{xt})
		\end{equation*}
		and such that 
		\begin{align}\label{weakformu}
			0 = 
			 \iiint \limits_{\O_{v} \times \O_{xt}} A(v,x,t) \nabla_v u \cdot \nabla_v \phi 
				\, \dd v \, \dd x \, \dd t +
			     \iint \limits_{\O_{xt}} \langle f (\cdot, x,t) - Y u (\cdot, x,t) | \phi(\cdot, x,t) \rangle 
			     \, \dd x \, \dd t     
		\end{align}
		for every $\phi \in L^2(\O_{xt}, H^1_c(\O_v))$ and where $\langle \cdot | \cdot \rangle$ is the standard duality pairing
		in $H^{-1}(\O_v)$.	
	\end{definition}
	As it also happens in the parabolic setting, a weak solution to $\L u = 0$ in the sense of 
	the above definition 
	is also a weak solution in the sense of distributions. Indeed, whenever $\phi \in C_c^{\infty}(\O_{v} 
	\times \O_{xt})$
	we have 
	\begin{equation*}
		0 = \iiint \limits_{\O_{v} \times \O_{xt} }
			\left(  A(v,x,t) \nabla_v u \cdot \nabla_v \phi 
				+ u Y \phi \right)
			     \, \dd v \, \dd x \, \dd t     . 
	\end{equation*}
		
We are now in a position to state our main result.
	\begin{theorem}
		\label{main-thm}
		Let assumptions (H) and (D) hold. Let $f \in L^2(\O_{xt}, H^{-1}(\O_v))$ and
		$g, \psi \in \W(\O_v \times \O_{xt})$.
		Then there exists a unique weak solution $u \in \W(\O_{v} \times \O_{xt})$
		in the sense of Definition \ref{weak-sol}
		to the obstacle problem \eqref{obstacle}. 
		Moreover, there exists a constant $C$, which only depends on $d$ and on $\O_{v} \times \O_{xt}$, such that
		\begin{equation}\label{quantest}
		\| u \|_{\W( \O_{xt} \times \O_{v})} \leq C 
		\left( \| g \|_{\W( \O_{xt} \times \O_{v})}+\| f \|_{L^2(\O_{xt}, H^{-1}(\O_v))}\right).
		\end{equation}				
	\end{theorem}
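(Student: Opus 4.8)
\emph{Strategy.} I would realise the weak solution as the minimiser of the (strictly convex, suitably regularised) functional of \eqref{functional-intro} over the admissible convex set, and then let the regularisation vanish; this is a variational counterpart of the penalisation scheme used at the level of strong solutions in \cite{obstacle-ex}. As a first step I would reduce to homogeneous lateral data: writing $u=g+w$ with $w\in\W_0(\O_v\times\O_{xt})$, problem \eqref{obstacle} becomes that of finding $w$ with $w\ge\widetilde\psi:=\psi-g$ and $\L w=\widetilde f:=f-\nabla_v\cdot(A\nabla_v g)-Yg$, where $\widetilde f\in L^2(\O_{xt},H^{-1}(\O_v))$ with $\|\widetilde f\|_{L^2(\O_{xt},H^{-1}(\O_v))}\le\|f\|_{L^2(\O_{xt},H^{-1}(\O_v))}+C\|g\|_{\W(\O_{xt}\times\O_v)}$ (since $g\in\W$ and $A$ is bounded), and $\widetilde\psi\le0$ on $\p_K(\O_v\times\O_{xt})$ by \eqref{ordering-ob}. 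Hence one may assume $g\equiv0$ and work on
\[
\KK:=\bigl\{\,w\in\W_0(\O_v\times\O_{xt})\ :\ w\ge\psi\ \text{in }\W(\O_v\times\O_{xt})\,\bigr\},
\]
which is convex and strongly --- hence weakly --- closed in $\W_0$, and nonempty: $\psi^{+}\in\KK$, since the positive part neither enlarges the $L^2(\O_{xt},H^1(\O_v))$--norm nor the $L^2(\O_{xt},H^{-1}(\O_v))$--norm of $Y$ (a truncation/chain--rule property of the kinetic Sobolev space), and $\psi^{+}$ vanishes on $\p_K$ because $\psi\le0$ there.

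\emph{The regularised minimisation.} For $\varepsilon>0$ I would minimise over $\KK$
\[
J_\varepsilon(w):=\inf\Bigl\{\iiint\limits_{\O_v\times\O_{xt}}\tfrac12\bigl(A(\nabla_v w-\mathbb{j})\bigr)\cdot(\nabla_v w-\mathbb{j})\,\dd v\,\dd x\,\dd t\ :\ \nabla_v\cdot(A\mathbb{j})=\widetilde f-Yw\Bigr\}+\tfrac{\varepsilon}{2}\iiint\limits_{\O_v\times\O_{xt}}|\nabla_v w|^2\,\dd v\,\dd x\,\dd t .
\]
For each $w$ the admissible $\mathbb{j}$ form a nonempty affine subset of $(L^2(\O_v\times\O_{xt}))^d$ --- one solves $\nabla_v\cdot(A\mathbb{j})=\widetilde f-Yw$ fibrewise in $(x,t)$ by Lax--Milgram in $H^1_c(\O_v)$, with $L^2$--norm bounded by $C(\|\widetilde f\|+\|Yw\|_{L^2(\O_{xt},H^{-1}(\O_v))})$ --- so $J_\varepsilon$ is finite; as a partial infimum of a jointly convex quadratic over an affine constraint plus a strictly convex quadratic, it is strictly convex and weakly lower semicontinuous on $\W_0$. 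For coercivity, if $\mathbb{j}^{\ast}=\mathbb{j}^{\ast}(w)$ denotes the optimal field, then $J_\varepsilon(w)\ge\tfrac\lambda2\|\nabla_v w-\mathbb{j}^{\ast}\|_{L^2}^2+\tfrac\varepsilon2\|\nabla_v w\|_{L^2}^2$, hence $\|\nabla_v w\|_{L^2}+\|\mathbb{j}^{\ast}\|_{L^2}\le C_\varepsilon\sqrt{J_\varepsilon(w)}$; from the constraint $\|Yw\|_{L^2(\O_{xt},H^{-1}(\O_v))}\le\|\widetilde f\|+\Lambda\|\mathbb{j}^{\ast}\|_{L^2}$, and Lemma \ref{lemmapoinc} bounds $\|w\|_{L^2(\O_{xt},L^2(\O_v))}$ by $\|\nabla_v w\|_{L^2}$, so that $\|w\|_{\W(\O_{xt}\times\O_v)}\le C_\varepsilon(\sqrt{J_\varepsilon(w)}+\|\widetilde f\|)$. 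Since $\W_0$ is reflexive and $\KK$ weakly closed, the direct method yields a minimiser $u_\varepsilon\in\KK$, unique by strict convexity.

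\emph{Optimality, uniform bounds, and the limit.} The minimality of $u_\varepsilon$ over $\KK$ gives, after differentiating $s\mapsto J_\varepsilon(u_\varepsilon+s(\phi-u_\varepsilon))$ at $s=0^{+}$ and unwinding the constraint $\nabla_v\cdot(A\mathbb{j})=\widetilde f-Yw$ that defines the inner infimum (so that the transport term reappears), the variational inequality
\[
\iiint\limits_{\O_v\times\O_{xt}}A\nabla_v u_\varepsilon\cdot\nabla_v(\phi-u_\varepsilon)\,\dd v\,\dd x\,\dd t+\iint\limits_{\O_{xt}}\bigl\langle\,\widetilde f-Y u_\varepsilon\,\big|\,\phi-u_\varepsilon\,\bigr\rangle\,\dd x\,\dd t+\varepsilon\iiint\limits_{\O_v\times\O_{xt}}\nabla_v u_\varepsilon\cdot\nabla_v(\phi-u_\varepsilon)\,\dd v\,\dd x\,\dd t\ \ge\ 0
\]
for all $\phi\in\KK$ (equivalently, $u_\varepsilon$ solves weakly the penalised equation $\nabla_v\cdot((A+\varepsilon I)\nabla_v u_\varepsilon)+Yu_\varepsilon+\tfrac1\varepsilon(\psi-u_\varepsilon)^{+}=\widetilde f$). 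Testing this with a fixed competitor in $\KK$ --- e.g.\ $\psi^{+}$, or the solution of the associated Dirichlet problem --- and combining the ellipticity \eqref{unifell}, the Cauchy--Schwarz and Young inequalities, Lemma \ref{lemmapoinc} and the energy inequality $\iint_{\O_{xt}}\langle Yw\,|\,w\rangle\,\dd x\,\dd t\le0$, valid for $w\in\W_0$ (which follows by integration by parts from the definition \eqref{boundary-k} of $\p_K$, cf.\ \cite{LitsgardNystrom,AAMN}), in order to absorb the transport and penalisation contributions, one derives a bound on $\|u_\varepsilon\|_{\W(\O_{xt}\times\O_v)}$ in terms of the data, uniform in $\varepsilon$, together with $\varepsilon\|\nabla_v u_\varepsilon\|_{L^2}^2\le C$. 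Consequently $u_\varepsilon\rightharpoonup u$ weakly in $\W_0$ along a subsequence, with $u\in\KK$ by weak closedness and $\varepsilon\nabla_v u_\varepsilon\to0$ in $L^2$; one then passes to the limit in the displayed inequality --- the $\phi$--linear terms by weak convergence, the terms involving $u_\varepsilon$ itself by a Minty--Browder monotonicity argument (test the inequality for $u_\varepsilon$ with $w_\theta:=u+\theta(\phi-u)\in\KK$, pass to the weak limit, then let $\theta\to0^{+}$), the monotonicity of the principal part being exactly the one supplied by the energy inequality --- and concludes that $u$, hence $g+u$, is a weak solution of \eqref{obstacle} in the sense of Definition \ref{weak-sol}. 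Passing to the limit in the uniform bound, by weak lower semicontinuity of the norm, yields \eqref{quantest} once $g$ is reinstated and $\|\widetilde f\|$ estimated by $\|f\|$ and $\|g\|$.

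\emph{Uniqueness, and the crux.} For two weak solutions $u_1,u_2$, using $u_2$ (resp.\ $u_1$) as a competitor in the variational inequality for $u_1$ (resp.\ $u_2$) and adding, the data terms cancel and
\[
\lambda\,\|\nabla_v(u_1-u_2)\|_{L^2}^2\ \le\ \iiint\limits_{\O_v\times\O_{xt}}A\nabla_v(u_1-u_2)\cdot\nabla_v(u_1-u_2)\,\dd v\,\dd x\,\dd t\ \le\ \iint\limits_{\O_{xt}}\bigl\langle Y(u_1-u_2)\,\big|\,u_1-u_2\bigr\rangle\,\dd x\,\dd t\ \le\ 0
\]
by the energy inequality, so $\nabla_v(u_1-u_2)=0$ and, by Lemma \ref{lemmapoinc}, $u_1=u_2$. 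I expect the two genuinely delicate points to be the two places where the transport operator $Y$, which contributes no coercivity in the energy norm, must be tamed: deriving the $\varepsilon$--uniform a priori estimate through the energy inequality together with a careful treatment of the penalisation term, and, above all, passing to the limit $\varepsilon\to0$ in the variational inequality, where the absence of compactness in the $Y$--component forces the monotonicity argument and a careful use of the truncation properties of $\W$ to keep the competitors inside $\KK$.
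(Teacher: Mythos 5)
Your strategy — elliptic regularisation of the Litsg\aa{}rd–Nystr\"om functional, the direct method on the convex set $\KK$, and a Minty--Browder passage to the limit — is a genuinely different route from the paper's, which follows \cite{LitsgardNystrom} and proves $\mathcal G(0)\le 0$ by Fenchel duality ($\mathcal G=\mathcal G^{**}$, then $\mathcal G^{*}\ge0$). There is, however, a gap that prevents your argument from reaching the statement as written. Minimising $J_\varepsilon$ over the constrained set $\KK$ and sending $\varepsilon\to0$ produces, in the limit, a solution of the \emph{variational inequality}
\[
\iiint_{\O_v\times\O_{xt}} A\nabla_v u\cdot\nabla_v(\phi-u)
+\iint_{\O_{xt}}\langle f-Yu\,|\,\phi-u\rangle\ \ge\ 0\qquad\forall\,\phi\in\KK,
\]
which is strictly weaker than Definition~\ref{weak-sol}: that definition requires the \emph{equality} \eqref{weakformu} against every $\phi\in L^2(\O_{xt},H^1_c(\O_v))$, i.e.\ $\L u=f$ across the whole domain, not only on the non-coincidence set. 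The paper is explicit that these two notions are not known to coincide — Proposition~\ref{equivalence2} proves only the implication (weak solution $\Rightarrow$ variational inequality), and Section~\ref{var-ineq} lists the converse as an open problem — so you cannot then conclude ``hence $g+u$ is a weak solution in the sense of Definition~\ref{weak-sol}.'' The whole purpose of the paper's convex-duality step is precisely to show that $\min_{\KK}J=0$, i.e.\ that the equation is attained inside $\KK$; that is the step your scheme does not replace. Relatedly, the aside that the Euler--Lagrange condition for $J_\varepsilon$ is ``equivalently'' the penalised equation $\nabla_v\cdot((A+\varepsilon I)\nabla_v u_\varepsilon)+Yu_\varepsilon+\tfrac1\varepsilon(\psi-u_\varepsilon)^{+}=\widetilde f$ is not correct: the $\tfrac1\varepsilon(\psi-u_\varepsilon)^{+}$ term arises when the obstacle constraint is \emph{relaxed} by a soft penalty, whereas your $J_\varepsilon$ keeps the hard constraint $w\in\KK$ and only adds $\tfrac{\varepsilon}{2}\|\nabla_v w\|_{L^2}^2$; those are different approximation schemes with different optimality conditions. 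The remaining ingredients you invoke — the reduction $u=g+w$, the non-emptiness of $\KK$ (cf.\ Lemma~\ref{not-empty}, where the paper's admissible competitor is simply $g$), the energy inequality $\iint_{\O_{xt}}\langle Yw\,|\,w\rangle\le0$ for $w\in\W_0$ via the Kolmogorov boundary \eqref{boundary-k}, Lemma~\ref{lemmapoinc}, and the derivation of the bound \eqref{quantest} by testing with $u-g$ — are all consistent with what the paper does for the a~priori estimate and the uniqueness part, and that portion of your proposal is sound.
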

	
\subsection{Outline of the paper}
This work is organized as follows. In Section \ref{preliminaries} we discuss the non-Euclidean geometrical setting suitable for 
		the study of operator $\L$, then we provide the reader with a characterization of non-negative functions and of the maximum 
		between two functions in the space $\W$. Section \ref{main-proof} is devoted to the proof of our main result, Theorem \ref{main-thm}. 
		Finally, Section \ref{var-ineq} concludes this work and contains the proof of a one-sided associated variational inequality, alongside with an overview on related 
		open problems.

	\section{Preliminaries}
	\label{preliminaries}

	\subsection{Geometrical setting}
	As firstly pointed out by Lanconelli and Polidoro in \cite{LP}, a Lie group structure is the most suitable geometrical setting
	for studying operator $\L$. Hence, we endow $\R^{2d+1}$ with the non- commutative group law
	\begin{equation*}
		z_0 \circ z = (v_0+v,x_0+x+tv_0,t_0+t), \qquad \forall z_0=(v_0,x_0,t_0) \in \R^{2d+1},
	\end{equation*}
	also known as Galilean change of variables. Then $\mathbb{G}:=(\R^{2d+1}, \circ)$ is a Lie group with identity 			
	element~$e:=(0,0,0)$ and inverse defined by
	$$
		z^{-1}:=(-v,-x+tv,-t), \qquad \qquad  \quad \forall z=(v,x,t) \in \R^{2d+1}.
	$$
	We observe that $\K$ is left invariant with respect to the group operation $\circ$.  Specifically, if $w(v,x,t) = u (v_{0} + v, x_{0} + x + t v_{0}, t_{0} + t)$ and $g(v,x,t) = f(v_{0} + v, x_{0} + x + t v_{0}, t_{0} + t)$, then 
\begin{equation*}
	\K u = f \quad \iff \quad \K w = g \quad \text{for  every} \quad (v_{0}, x_{0}, t_{0}) \in \R^{2m+1}.
\end{equation*}
Moreover,
	we are also allowed to introduce a family of dilations 
	\begin{align*}
		\delta_r(z) = (r v, r^3 x,r^2 t), \quad \quad \forall r >0, \, \forall z=(v,x,t) \in \R^{2d+1}.
	\end{align*}
	with respect to which the operator $\K$ is invariant. Indeed,
	if~$u$ is a solution to $\K u = 0$, then for every~$r>0$ the scaled function $u_r(z) = u(\delta_r(z))$
	satisfies the same equation in a suitably rescaled domain. Hence, we say $\K $ 
	is homogeneous of degree~$2$ with respect to the dilation group~$\{\delta_{r}\}_{r>0}$.

	\subsection{Well-posedness of the obstacle problem}
	 First of all, by paralleling the definition of a non-negative function in $H^1$ 
	\cite[Definition 5.1]{KinderlehrerStampacchia} we introduce the definition of non-negative function
	in the sense of $\W(\O_v \times \O_{xt})$.
		
	\begin{definition} \label{non-neg}
		Let $w \in \W(\O_v \times \O_{xt})$ and $E \subset \overline{\O_{v} \times \O_{xt}}$
		The function $w$ is nonnegative on $E$ in the sense of $\W(\O_v \times \O_{xt})$,
		if there exists a sequence $w_n \in C^{\infty}_{\!\!c}(\O_{v} \times \O_{xt})$ such that 
		\begin{align*}
			w_n \ge 0 \, \, \text{in } E \quad \text{and} \quad w_n \to w \, \, \text{in } \W(\O_{v} \times \O_{xt}).
		\end{align*}
		If $-w \ge 0$ on $E$ in the sense of $\W(\O_v \times \O_{xt})$, then $w$ is said to be nonpositive 
		on $E$ in the sense of $\W(\O_v \times \O_{xt})$. If $w$ is both $w\ge0$
		and $w \le 0$ on $E$ in the sense of $\W(\O_v \times \O_{xt})$, then $w=0$ on $E$ in the sense of $\W(\O_v \times \O_{xt})$.
	\end{definition}
	Similarly, for any $w,z \in \W(\O_v \times \O_{xt})$ we say $w \le z$ on $E$ in the sense of $\W(\O_v \times \O_{xt})$ if 
	$w -z \ge 0$ on $E$ in the sense of $\W(\O_v \times \O_{xt})$.
	In particular, $z$ may be constant. Hence, we are able to introduce the definition 
	\begin{equation*}
		\sup \limits_E w = \inf \, \left\{ M \in \R: \, \, w \le M \, \, \text{on $E$ in the sense of $\W(\O_v \times \O_{xt})$} \right\},
	\end{equation*}
	and analogously of 
	\begin{equation*} 
		\inf \limits_E w = \sup \left\{ m \in \R: \, \, w \ge m \, \, \text{on $E$ in the sense of $\W(\O_v \times \O_{xt})$} \right\}.
	\end{equation*}
	
	Now, we prove an 
	explicit characterization for non-negative functions on $E$ in the sense of $\W(\O_v \times \O_{xt})$. 
	This result plays an important role in our analysis, since it allows us to define the set $\KK_{\psi}$ in \eqref{K-def},
	which is the starting point of our variational analysis for the obstacle problem.
	\begin{lemma}
		\label{K-equiv}
		Let $w \in \W(\O_v \times \O_{xt})$ and $E \subset \overline{\O_{v} \times \O_{xt}}$ be bounded. 
		Then $w \ge 0$ on $E$ in the sense of $\W(\O_v \times \O_{xt})$, if and only if
		$w \ge 0$ on $E$ a.e. 
	\end{lemma}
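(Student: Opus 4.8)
I would prove the two implications separately. The forward one is elementary: if $w\ge 0$ on $E$ in the sense of $\W(\O_v\times\O_{xt})$, pick $w_n\in C^\infty_c(\overline{\O_v\times\O_{xt}})$ as in Definition~\ref{non-neg}, with $w_n\ge 0$ on $E$ and $w_n\to w$ in $\W(\O_v\times\O_{xt})$. Since the $\W$--norm dominates $\|\cdot\|_{L^2(\O_{xt},L^2(\O_v))}=\|\cdot\|_{L^2(\O_v\times\O_{xt})}$, the sequence converges in $L^2(\O_v\times\O_{xt})$ and hence a.e.\ along a subsequence; as $w_n\ge 0$ on $E$ for every $n$, we get $w\ge 0$ a.e.\ on $E\cap(\O_v\times\O_{xt})$, and therefore a.e.\ on $E$, since $\p(\O_v\times\O_{xt})$ is Lebesgue--null.

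For the converse, assume $w\ge 0$ a.e.\ on $E$; I must exhibit smooth functions, non-negative on $E$, converging to $w$ in $\W$. The key ingredient is a renormalisation (truncation) property of $\W$: for $\beta\in C^1(\R)$ Lipschitz with $\beta(0)=0$ one has $\beta(w)\in\W(\O_v\times\O_{xt})$ with $\nabla_v\beta(w)=\beta'(w)\nabla_v w$ and $Y\beta(w)=\beta'(w)Yw$, and $w\mapsto\beta(w)$ is continuous on $\W$; applying this to a smooth convex regularisation of $s\mapsto\max(s,0)$ and letting the parameter vanish gives in particular $w^{\pm}\in\W$ and continuity of $w\mapsto w^{\pm}$. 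The second ingredient is that convolution with respect to the group law $\circ$ of Section~\ref{preliminaries} yields admissible smooth approximants: since $Y$ is left--invariant under $\circ$, a mollification argument — combined near $\p(\O_v\times\O_{xt})$ with a localisation that pushes the boundary slightly outward, legitimate under assumption (D) — shows that every $u\in\W$ is a $\W$--limit of functions in $C^\infty_c(\overline{\O_v\times\O_{xt}})$, and these may be taken non-negative whenever $u\ge 0$ a.e., the mollifier being non-negative. Combining the two, I would write $w=w^+-w^-$, so that $w^+\ge 0$ a.e.\ and $w^-=0$ a.e.\ on $E$, choose smooth $p_n\ge 0$ with $p_n\to w^+$ in $\W$ and smooth $m_n$ vanishing in a neighbourhood of $E$ with $m_n\to w^-$ in $\W$ (possible since $w^-$ vanishes a.e.\ on $E$), and set $w_n:=p_n-m_n$; then $w_n$ is smooth, $w_n=p_n\ge 0$ on $E$, and $w_n\to w$ in $\W$.

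The step I expect to be the main obstacle is the renormalisation property of $\W$. In the component $L^2(\O_{xt},H^1(\O_v))$ the chain rule is the classical $H^1$ one, but in the transport seminorm $\|Y\,\cdot\,\|_{L^2(\O_{xt},H^{-1}(\O_v))}$ no pointwise chain rule is available, since $Yw(\cdot,x,t)$ is only a distribution in $v$ and cannot naively be multiplied by $\chi_{\{w>0\}}$. Making $Y\beta(w)=\beta'(w)Yw$ rigorous — equivalently, bounding $\|Yw^{+}\|_{L^2(\O_{xt},H^{-1}(\O_v))}$ by $C\|w\|_{\W}$ — is the heart of the matter, and I would approach it by approximating $w$ with smooth functions, using the classical chain rule for those, and passing to the limit by exploiting the extra velocity regularity $w\in L^2(\O_{xt},H^1(\O_v))$ together with the affine structure of the vector field underlying $Y$.
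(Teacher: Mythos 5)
Your forward implication follows the paper's argument exactly and is correct: $\W$-convergence dominates $L^2(\O_v\times\O_{xt})$-convergence, so a subsequence converges a.e.\ and the sign passes to the limit.

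For the converse you take a genuinely different route from the paper. The paper truncates the \emph{smooth approximants} of $w$: it picks $w_n\in C^\infty_{0,K}$ with $w_n\to w$ in $\W$, forms $\max(w_n,0)$, asserts this sequence is bounded in $\W$ (hence weakly convergent after extraction, necessarily to $w$ by the $L^2$ identification), and then uses Mazur's lemma to get strongly $\W$-convergent convex combinations. You instead truncate $w$ itself, invoking a renormalisation property of $\W$ to place $w^\pm$ in $\W$, and then re-mollify along the group. The renormalisation step is a genuine gap, and you flag it yourself, but the resolution you sketch does not close it: if $w_n\to w$ in $\W$ with $w_n$ smooth, the classical chain rule gives $Y\max(w_n,0)=\chi_{\{w_n>0\}}\,Yw_n$ a.e., yet this is bounded a priori only by $\|Yw_n\|_{L^2(\O_{xt},L^2(\O_v))}$, which is \emph{not} controlled by $\|w_n\|_{\W}$; only the much weaker norm $\|Yw_n\|_{L^2(\O_{xt},H^{-1}(\O_v))}$ is. Nothing in ``the affine structure of the vector field'' or ``the extra velocity regularity'' is made concrete enough to supply the estimate $\|Yw^+\|_{L^2(\O_{xt},H^{-1}(\O_v))}\le C\|w\|_{\W}$, and until it is, the final construction $w_n:=p_n-m_n$ cannot even get started. (The paper's own proof is also silent on exactly this point — it exhibits only an $L^2$ bound for $\max(w_n,0)$, not the required $\W$-bound — so the difficulty is inherent, but it does remain a gap in your plan.) A further, smaller, weakness: your assembly needs smooth $m_n\to w^-$ in $\W$ with $m_n$ vanishing on a \emph{neighbourhood} of $E$, which does not follow from $w^-=0$ a.e.\ on $E$ by plain mollification when $w^-$ is supported arbitrarily close to $\partial E$; you would need an additional cut-off argument, itself to be justified in the $\W$-topology.
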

	
	\begin{proof} The right implication trivially follows by considering that $w \ge 0$ 
		in the sense of $\W (\O_{v} \times \O_{xt})$ implies $w \ge 0$  also in the sense of $L^2(\O_{v},L^2( \O_{xt}))$; 
		and hence a.e.
				
		As far as we are concerned with the left implication, given our definition of $\W(\O_v 
		\times \O_{xt})$, there exists a sequence of functions $w_n \in C^\infty_{0,K}(\O_v 
		\times \O_{xt})$ such that $w_n \to w$ in $\W(\O_v \times \O_{xt})$ and in
		$\O_v \times \O_{xt}$ pointwise a.e. Hence, $\max ( w_n, 0) \ge 0$ and 
		$w = \max (w, 0)$ in $\O_v \times \O_{xt}$ a.e. So, we have
		\begin{align*}
			\| \max ( w_n, 0) - w \|_{L^2(\O_{v},L^2( \O_{xt}))} &= \| \max ( w_n, 0) -  \max (w, 0) \|_{L^2(\O_{v},L^2( \O_{xt}))} \\
			&\le \| w_n - w \|_{L^2(\O_{v},L^2( \O_{xt}))}  \to 0 \quad \text{in } L^2(\O_{v},L^2( \O_{xt}))
			\, \, \text{as } n \to \infty.
		\end{align*}
		From this, it follows
		\begin{align*}
			\iiint \limits_{\O_v \times \O_{xt}} \max (w_n , 0)^2 \, \dd v \, \dd x \, \dd t
			\le \iiint \limits_{\O_v \times \O_{xt}} w_n^2 \, \dd v \, \dd x \, \dd t \le C .
		\end{align*}
		Hence, the sequence $\max (w_n, 0)$ has a subsequence that converges weakly in 
		$\W(\O_v \times \O_{xt})$ to a certain element, that needs to be $w$ considering the previous 
		computations. Hence, by Mazur's lemma,	we conclude the proof.	
	\end{proof}

	Finally, taking into consideration the results above and the H\"older regularity results for weak solution to $\L u = f$ 
	proved in \cite{AR-harnack, GI, GM, Zhu, Silvestre}
	(see Section \ref{intro}), the ordering relation between the obstacle and the boundary data
	introduced in \eqref{ordering-ob} is well-posed and classically justified as in \cite[Chapter 10]{Brezis}
	by applying the classical maximum principle proved in \cite{Bony}. Hence, 
	the obstacle problem \eqref{obstacle} is well-posed.

\section{Proof of Theorem \ref{main-thm}}
\label{main-proof}
This section is devoted to the proof of Theorem \ref{main-thm} via an adaptation of the method proposed in \cite{LitsgardNystrom}
to address the study of a Dirichlet problem. 

	Throghout this section, we consider a fixed obstacle $\psi \in \W(\O_v \times \O_{xt})$, right-hand side $f \in L^2(\O_{xt}, H^{-1}(\O_v))$ and 
	boundary data $g \in \W(\O_v \times \O_{xt})$. Then, we introduce 
	the set:
	\begin{align} \label{K-def}
		\KK(\psi,g) := \left\{ w \in  \W(\O_{v} \times \O_{xt}) : \, \,  w \in g + \W_0(\O_{v} \times \O_{xt}), \right. \\ \nonumber 
		\left. w \ge \psi \, \, \text{in  } \O  \,\, \text{in } \, \W(\O_v \times \O_{xt}) \right\}.
	\end{align}
	Then we note it is not empty, as $\psi \in \KK(\psi,g)$. 
	Moreover, $\KK(\psi,g)$ is a convex set and, thanks to Lemma \ref{K-equiv}, $w \in \KK(\psi,g)$ if and only if
	\begin{align*}
		w \in  \W(\O_{v} \times \O_{xt}), \, \,  w \in g + \W_0(\O_{v} \times \O_{xt}), \quad \text{and} \quad w \ge \psi \, \, \text{a.e. in } \, 
		 \W(\O_{v} \times \O_{xt}).
	\end{align*} 
	Now, for every pair of functions $(w,f)$ such that
	\begin{equation*}
		w \in L^2(\O_{xt}, H^1(\O_v)) \quad \text{and} \quad f - Y w \in 
		L^2 (\O_{xt}, H^{-1}(\O_v)),
	\end{equation*}
	we may introduce a functional $J$ defined as
	\begin{align}
		\label{functional}
		J [w, f] = \, \inf \limits_ {G(w,f)} \,  \, \iiint \limits_{\O_v \times \O_{xt}} \frac12 
		\left( A \left( \nabla_v w - \mathbb{j} \right) \right) \cdot
					\left( \nabla_v w - \mathbb{j} \right) \, \dd v \, \dd x \, \dd t
	\end{align}
	where the infimum is taken over the set
	\begin{align}
		\label{Gw}
		G(w,f) = \left\{ \mathbb{j} \in (L^2(\O_{xt}, L^2(\O_v))^d: \, \nabla_v \cdot \left( A \mathbb{j} \right) = f - Yw \right\}.
	\end{align}
	The functional $J$ is well-defined, since we are interested in solutions to \eqref{obstacle}, where 
	$f \in L^2 (\O_{xt}, H^{-1}(\O_v))$ and $\psi,g \in \W(\O_{v} \times \O_{xt})$ are fixed by 
	the problem.
	Moreover, we highlight that the condition each $\mathbb{j}$ in $G(w,f)$ needs to satisfy 
	is intended in the sense of distributions, i.e. 
		\begin{align*}
			- \iiint \limits_{\O_{v} \times \O_{xt}} A \mathbb{j} \cdot \nabla_v \phi \, \dd v \, \dd x \, \dd t 
			= \iint \limits_{\O_{xt}} \langle (f - Y w)(\cdot,x,t) | \phi(\cdot,x,t) \rangle  \dd x \, \dd t 
			\qquad \forall \phi \in L^2 \left( \O_{xt}, H^1(\O_v) \right),
		\end{align*}
	where $\langle \cdot | \cdot \rangle$ is the standard duality pairing in $H^{-1}(\O_v)$.
	
	Then, we are in a position to prove the one-to-one correspondence between the solution of the 
	obstacle problem and the 
	(zero) minimizer of the functional $J$. 
	Note that an analogous equivalence result for the Dirichlet problem was proved in
	\cite[Lemma 3.3]{LitsgardNystrom}.  
	\begin{lemma}
		\label{equivalenza1}
		Let assumptions (H) and (D) hold. Let $f \in L^2(\O_{xt}, H^1(\O_v))$, $\psi,g \in \W(\O_v \times \O_{xt})$ be fixed by the problem 
		\eqref{obstacle}. Then $u \in \KK(\psi,g)$ is a solution to \eqref{obstacle} if and only if 
		\begin{equation*}
			0 = J[u,f] = \min \limits_{w \in \KK(\psi,g)} J[w,f].
		\end{equation*}
	\end{lemma}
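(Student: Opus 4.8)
The plan is to establish the two implications separately, the link between them being the elementary observation that, in the definition \eqref{functional}--\eqref{Gw} of $J[u,f]$, the particular competitor $\mathbb{j}=\nabla_v u$ is admissible precisely when $u$ satisfies the weak formulation \eqref{weakformu}. As a preliminary reduction I would record that $J[\,\cdot\,,f]\ge 0$ throughout its domain of definition: by the lower bound in \eqref{unifell}, for any admissible $w$ and any $\mathbb{j}\in G(w,f)$ one has $\tfrac12(A(\nabla_v w-\mathbb{j}))\cdot(\nabla_v w-\mathbb{j})\ge\tfrac{\lambda}{2}|\nabla_v w-\mathbb{j}|^2\ge 0$ pointwise a.e., so the integral, hence the infimum over $G(w,f)$, is nonnegative (with the convention that the infimum over an empty $G(w,f)$ is $+\infty$). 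Consequently $\min_{w\in\KK(\psi,g)}J[w,f]\ge 0$, and the chain of equalities in the statement collapses to the single assertion $J[u,f]=0$.

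For the direct implication, suppose $u\in\KK(\psi,g)$ solves \eqref{obstacle} in the sense of Definition \ref{weak-sol}. Since $u\in\W(\O_v\times\O_{xt})\subset L^2(\O_{xt},H^1(\O_v))$, the velocity gradient $\nabla_v u$ belongs to $(L^2(\O_{xt},L^2(\O_v)))^d$; moreover $f-Yu\in L^2(\O_{xt},H^{-1}(\O_v))$ since $f$ is of this class and $Yu$ is so because $u\in\W$. Rewriting \eqref{weakformu} as $-\iiint A\nabla_v u\cdot\nabla_v\phi\,\dd v\,\dd x\,\dd t=\iint\langle f-Yu\mid\phi\rangle\,\dd x\,\dd t$ shows that $\mathbb{j}=\nabla_v u$ satisfies the constraint in \eqref{Gw}, i.e. $\mathbb{j}\in G(u,f)$. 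With this choice the integrand in \eqref{functional} vanishes identically, so $J[u,f]\le 0$; together with the nonnegativity above, $0=J[u,f]=\min_{w\in\KK(\psi,g)}J[w,f]$.

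Conversely, assume $u\in\KK(\psi,g)$ and $J[u,f]=0$. Membership in $\KK(\psi,g)$ already yields $u\in\W(\O_v\times\O_{xt})$, $u\in g+\W_0(\O_v\times\O_{xt})$ and $u\ge\psi$ in the sense of $\W$, so only the identity \eqref{weakformu} remains to be verified. Since $J[u,f]=0<+\infty$ the admissible set $G(u,f)$ is nonempty, and I would pick a minimizing sequence $\mathbb{j}_k\in G(u,f)$ along which $\iiint\tfrac12(A(\nabla_v u-\mathbb{j}_k))\cdot(\nabla_v u-\mathbb{j}_k)\,\dd v\,\dd x\,\dd t\to 0$. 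By the $\lambda$-coercivity of the quadratic form this forces $\|\nabla_v u-\mathbb{j}_k\|_{L^2(\O_{xt},L^2(\O_v))}\to 0$, and since $A$ is bounded, $A\mathbb{j}_k\to A\nabla_v u$ strongly in $L^2$. Each $\mathbb{j}_k$ obeys $-\iiint A\mathbb{j}_k\cdot\nabla_v\phi\,\dd v\,\dd x\,\dd t=\iint\langle f-Yu\mid\phi\rangle\,\dd x\,\dd t$ for every $\phi\in L^2(\O_{xt},H^1_c(\O_v))$, with a right-hand side independent of $k$; letting $k\to\infty$ yields $0=\iiint A\nabla_v u\cdot\nabla_v\phi\,\dd v\,\dd x\,\dd t+\iint\langle f-Yu\mid\phi\rangle\,\dd x\,\dd t$, which is exactly \eqref{weakformu}. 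Hence $u$ is a weak solution in the sense of Definition \ref{weak-sol}.

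The arithmetic here is light and most of the bookkeeping is notational; the one step that genuinely needs care is the converse, namely that the affine constraint defining $G(u,f)$ is stable under the strong $L^2$ convergence of fluxes imposed by the coercive integrand. This stability is precisely what upgrades the vanishing of $J[u,f]$ into the weak formulation, and it is the point where both halves of assumption (H) come into play: ellipticity for the coercivity that produces strong convergence of $\mathbb{j}_k$ to $\nabla_v u$, and boundedness of $A$ for passing to the limit in the linear constraint.
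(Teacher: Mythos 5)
Your proof is correct and follows essentially the same route as the paper: nonnegativity of $J$ from the ellipticity lower bound, the forward implication by noting $\mathbb{j}=\nabla_v u$ is admissible when $u$ satisfies \eqref{weakformu}, and the converse by extracting the weak formulation from $J[u,f]=0$. Your treatment of the converse is in fact a touch more careful than the paper's, which asserts $\mathbb{j}=\nabla_v u$ a.e.\ without remarking that an infimum need not be attained; you instead run a minimizing sequence $\mathbb{j}_k$, use $\lambda$-coercivity to obtain strong $L^2$ convergence $\mathbb{j}_k\to\nabla_v u$, and pass to the limit in the linear constraint via boundedness of $A$, which is the cleaner way to close that step.
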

	\begin{proof}
		The proof of this equivalence result relies on the observation that, since $A$ satisfies the 
		ellipticity assumption (H), then $J[w,f] \ge 0$ for every $w \in \KK(\psi,g)$ and therefore we need to prove that
\begin{equation*}
u \textit{ {\rm is a weak solution to \eqref{obstacle}}}\quad \iff \quad J[u,f]=0.
\end{equation*}	
		\noindent
		$(\Longrightarrow)$ Let $u \in \W(\O_v \times \O_{xt})$ be a solution to \eqref{obstacle}, then our aim is to show 
		$J[u,f]=0$. Notice that by definition \eqref{functional} we have
		\begin{align*}
			J[u, f] = \, \inf \limits_{G(u,f)} \, \, \iiint \limits_{\O_v \times \O_{xt}}
			\frac12 \left( A \left( \nabla_v u -\mathbb{j} \right) \right)\cdot \left( \nabla_v u - \mathbb{j} \right)
			\, \dd v \, \dd x \, \dd t,
		\end{align*}
		where $G(u,f)$ is defined as in \eqref{Gw}. Given that $u$ is a solution to \eqref{obstacle}
		by assumption, then the infimum in \eqref{functional} is attained at $\mathbb{j} = \nabla_v u$.
		Hence, $J[u, f] = 0$.
		
		\medskip
		
		\noindent
		$(\Longleftarrow)$ Let $u \in \KK(\psi,g)$ be such that $J[u, f]=0$, then our aim is to
		show $u$ is a solution to \eqref{obstacle}. Given our assumptions, we have
		\begin{align*}
			0 = J [u, f] = \, \inf \limits_ {G(u,f)} \, \, \iiint \limits_{\O_v \times \O_{xt}} 
			\frac12 \left( A \left( \nabla_v u - \mathbb{j} \right) \right) \cdot
					\left( \nabla_v u - \mathbb{j} \right) \, \dd v \, \dd x \, \dd t,
		\end{align*}
		hence $\mathbb{j} = \nabla_v u$ a.e. in $\O_v \times \O_{xt}$, as the integrand is non-negative. Finally, given the definition of $\mathbb{j}$ in \eqref{Gw}, 
		we conclude 
		\begin{equation*}
			\iiint \limits_{\O_v \times \O_{xt}} A \nabla_v u \cdot \nabla_v \phi \, \dd v \, \dd x \, \dd t
			= \iiint \limits_{\O_v \times \O_{xt}} f \phi \, \dd v \, \dd x \, \dd t - \iint \limits_{ \O_{xt}}
			\langle Y u(\cdot, x,t) | \phi(\cdot, x,t) \rangle \, \, \dd x \, \dd t
		\end{equation*}
		for every $\phi \in L^2 (\O_{xt}, H^1(\O_v))$.
		Hence, $u$ is by definition a solution to \eqref{obstacle}.
	\end{proof}
	Hence, in order to prove Theorem \ref{main-thm} we reduce ourselves 
	to show the existence of a (zero) minimizer for $J$. 
	
	\medskip
	
	Now, let us consider problem \eqref{obstacle} and, given the data $f$, the obstacle $\psi$ and the boundary data $g$, 
	we introduce the set of pairs of functions satisfying it:
	\begin{equation} \label{Acors}
			\A(f,\psi, g) =  \left\{ (u,\mathbb{j}) \in \KK(\psi,g) \times (L^2(\O_{xt}, L^2(\O_v))^d \, | \, \nabla_v \cdot
			(A(v,x,t) \mathbb{j}) = f - Yu \right\}.
		\end{equation}
	\begin{lemma}
		\label{not-empty}
		Let $(f, \psi, g) \in L^2(\O_{xt}, H^{-1}(\O_v))\times \W(\O_v \times \O_{xt}) \times \W(\O_v \times \O_{xt})$ be fixed. 
		Then the set $\A(f, \psi, g)$ is not empty. 
	\end{lemma}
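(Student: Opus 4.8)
The plan is to exhibit an explicit pair $(u,\mathbb{j})$ lying in $\A(f,\psi,g)$. For the first component we take $u:=\psi$, which belongs to $\KK(\psi,g)$ as observed right after \eqref{K-def}. Since $\psi\in\W(\O_v\times\O_{xt})$, the definition of $\W$ gives $Y\psi\in L^2(\O_{xt},H^{-1}(\O_v))$, so that $F:=f-Y\psi\in L^2(\O_{xt},H^{-1}(\O_v))$. It then suffices to construct $\mathbb{j}\in(L^2(\O_{xt},L^2(\O_v)))^d$ with $\nabla_v\cdot(A\mathbb{j})=F$ in the distributional sense recalled after \eqref{Gw}.

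The construction is carried out slicewise. For a.e.\ $(x,t)\in\O_{xt}$ the slice $F(\cdot,x,t)$ is a well-defined element of $H^{-1}(\O_v)$, and I would solve the auxiliary elliptic problem on $\O_v$: find $w(\cdot,x,t)\in H^1_c(\O_v)$ such that
\begin{equation*}
\int_{\O_v} A(v,x,t)\,\nabla_v w(\cdot,x,t)\cdot\nabla_v\phi\,\dd v = -\langle F(\cdot,x,t)\,|\,\phi\rangle\qquad\text{for all }\phi\in H^1_c(\O_v).
\end{equation*}
The bilinear form on the left is bounded (by $\Lambda$) and, combining the lower bound in \eqref{unifell} with the Poincar\'e inequality of Lemma \ref{lemmapoinc}, coercive on $H^1_c(\O_v)$ with a constant depending only on $d$, $\O_v$ and $\lambda$; Lax--Milgram then yields a unique solution $w(\cdot,x,t)$ with $\|w(\cdot,x,t)\|_{H^1(\O_v)}\le C\,\|F(\cdot,x,t)\|_{H^{-1}(\O_v)}$. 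Setting $\mathbb{j}:=\nabla_v w$, the displayed identity is precisely the slicewise version of $\nabla_v\cdot(A\mathbb{j})=F$, and integrating it over $\O_{xt}$ against test functions $\phi\in L^2(\O_{xt},H^1_c(\O_v))$ — which is legitimate by Fubini, since $A$ is bounded and $\mathbb{j},\nabla_v\phi\in L^2$ — gives that identity in the required sense. Moreover
\begin{equation*}
\iint_{\O_{xt}}\|\mathbb{j}(\cdot,x,t)\|^2_{L^2(\O_v)}\,\dd x\,\dd t \le C^2\iint_{\O_{xt}}\|F(\cdot,x,t)\|^2_{H^{-1}(\O_v)}\,\dd x\,\dd t = C^2\,\|F\|^2_{L^2(\O_{xt},H^{-1}(\O_v))}<+\infty,
\end{equation*}
so $\mathbb{j}\in(L^2(\O_{xt},L^2(\O_v)))^d$ and $(\psi,\mathbb{j})\in\A(f,\psi,g)$, which is therefore non-empty.

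The only delicate point I foresee is the measurability of the parametrized family $(x,t)\mapsto w(\cdot,x,t)\in H^1_c(\O_v)$: since the coefficients $a_{ij}$ are merely measurable in $(x,t)$, the Lax--Milgram solution is not a fixed bounded operator applied to the measurable map $(x,t)\mapsto F(\cdot,x,t)$, so a measurable-selection type argument is needed. I would handle this by a Galerkin scheme: fix a countable basis $(e_k)_{k\ge 1}$ of $H^1_c(\O_v)$; the $n$-th Galerkin approximation $w_n(\cdot,x,t)=\sum_{k=1}^n c^n_k(x,t)\,e_k$ solves a linear system $M_n(x,t)\,c^n(x,t)=b^n(x,t)$ whose matrix $M_n(x,t)$, with entries $\int_{\O_v}A(v,x,t)\nabla_v e_k\cdot\nabla_v e_j\,\dd v$, is uniformly positive definite by coercivity and has measurable entries by Fubini, while $b^n(x,t)$, with entries $-\langle F(\cdot,x,t)\,|\,e_j\rangle$, is measurable because $F\in L^2(\O_{xt},H^{-1}(\O_v))$; hence $c^n$, and therefore $w_n$, is measurable into $H^1_c(\O_v)$. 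As the form is symmetric, bounded and coercive, $w_n(\cdot,x,t)\to w(\cdot,x,t)$ in $H^1_c(\O_v)$ for a.e.\ $(x,t)$, and a pointwise a.e.\ limit of measurable maps is measurable. All the remaining steps are a routine slicewise application of Lax--Milgram together with Fubini, in the spirit of the analogous step for the Dirichlet problem in \cite{LitsgardNystrom}.
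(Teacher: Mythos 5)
Your argument is essentially the paper's: both produce a pair by fixing an admissible first component and then solving the slicewise elliptic problem $\nabla_v\cdot(A\nabla_v w)=f-Yu$ via Lax--Milgram on $H^1_c(\O_v)$, using the ellipticity bound (H) together with the Poincar\'e inequality of Lemma~\ref{lemmapoinc} for coercivity. The only differences are cosmetic or in your favor: you take $u=\psi$ (the element the paper itself exhibits in $\KK(\psi,g)$ right after \eqref{K-def}) whereas the paper takes $u=g$, and you spell out the $(x,t)$-measurability of the slicewise Lax--Milgram solution via a Galerkin argument, a point the paper passes over in silence; that extra care is warranted since $A$ is only measurable in $(x,t)$, so the solution operator is itself $(x,t)$-dependent and the measurable-selection issue you flag is real.
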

	\begin{proof}
		To prove this result, it is sufficient to show there exists at least a pair of functions belonging to $\A(f,\psi,g)$.
		Our idea is to apply the Lax-Milgram Theorem considering the Hilbert space $H^1_c(\O_v)$ 
		and the set $H^{-1}_c(\O_v)$ of linear functionals 
		on $H^1_c(\O_v)$. Hence, for every $w,\phi \in H^1_c(\O_v)$ we introduce the bilinear form
		$$
			b (w,\phi) = \int \limits_{\O_v}  A \nabla_v w \cdot \nabla_v \phi \, \dd v .
		$$
We observe that, in virtue of the boundedness of $A$ and the Cauchy-Schwarz inequality, the bilinear form is continuous, i.e.
\begin{equation*}
\left\vert b(w,\phi)\right\vert \leq |A|  \| \nabla_v w \|^{2}_{L^2(\O_v)}  \| \nabla_v \phi \|^{2}_{L^2(\O_v)}. 
\end{equation*}
		Now, we need to show the bilinear form $b$ is coercive in $H^1_c(\O_v)$. 
		By the ellipticity assumption (H) for the matrix $A$, we have
		\begin{align*}
			b (w,w) = \int \limits_{\O_v}  A \nabla_v w \cdot \nabla_v w \, \dd v \ge \lambda \| \nabla_v w \|^{2}_{L^2(\O_v)}.
		\end{align*}
		Then, by the Lax-Milgram theorem, for any bounded functional $L \in H^{-1}_c(\O_v)$ there exists a unique 
		$w$ such that 
		$$
			L(\phi) = b(w, \phi) \quad \text{in } H^{-1}_c(\O_v). 
		$$
		Thus, by choosing $\phi=g$ and $L=  f - Yu$, for a.e. fixed $(x,t) \in \O_{xt}$ the equation
		\begin{equation*}
			 \nabla_v \cdot (A(v,x,t) \nabla_v w(v,x,t)) = f(v,x,t) - Yu(v,x,t) 
		\end{equation*}
		admits a unique solution $w(\cdot)= w (\cdot, x,t) \in H^1_c(\O_v)$. In particular, the set $\A$ is not empty, since
		at least we have 
		$$
			(g, \nabla_v w) \in \A(f,\psi,g).
		$$

	\end{proof}
	
	\medskip
	
For every $u \in\KK(\psi,g)$ and $\mathbb{j} \in (L^2(\O_{xt}, L^2(\O_v))^d$, we now define the functional 
\begin{equation}\label{Jcors}
\mathscr{J}[u,\mathbb{j}]:=\iiint \limits_{\O_v \times \O_{xt}}
			\frac12 \left( A \left( \nabla_v u -\mathbb{j} \right) \right)\cdot \left( \nabla_v u - \mathbb{j} \right)
			\, \dd v \, \dd x \, \dd t,
\end{equation}
which is uniformly convex on $\A(f,\psi,g)$, as proved in \cite[Lemma 3.2]{LitsgardNystrom}. 
As a consequence, there exists a unique minimizing pair $(\tilde{u},\tilde{\mathbb{j}}) \in \A(f,\psi,g)$ such that
\begin{equation}\label{argmin}
	(\tilde{u},\tilde{\mathbb{j}}) = \arg\min \limits_{(u,\mathbb{j})\in \A(f,\psi,g)} \, \mathscr{J}[u,\mathbb{j}].
\end{equation}
We observe that
\begin{equation*}
	\min_{(u,\mathbb{j}) \in \A(f,\psi,g)}\mathscr{J}[u,\mathbb{j}]=\min_{u \in \KK(\psi,g)}J[u,f].
\end{equation*}
Hence, by construction and considering the ellipticity of $A$, we infer $J[\tilde{u},f] \geq 0$. 
Thus, keeping in mind Lemma \ref{equivalenza1}, it is sufficient to show that 
\begin{equation}\label{negfunc}
J[\tilde{u},f] \leq 0,
\end{equation}
given the unique minimizing pair $(\tilde{u},\tilde{\mathbb{j}})$ in \eqref{argmin} to conclude the proof of the existence and uniqueness part of Theorem \ref{main-thm}. 

\bigskip

To this end, we introduce a perturbed convex minimization problem, defined for every $u^* \in L^2(\O_{xt}, H^{-1}(\O_v))$ in terms of the functional
\begin{equation}\label{defG}
	\mathcal{G}(u^*):= \inf_{u \in \KK(\psi,0)} \left(J[u+g, u^* + f] -\iint \limits_{ \O_{xt}}\langle u^*(\cdot,x,t)|u(\cdot,x,t) \rangle dx\,dt \right).
\end{equation}
Now, we firstly observe that by definition
\begin{equation*}
	\mathcal{G}(0)=\inf_{u \in \KK(\psi,0)}J[u+g,f] = \inf_{u \in \KK(\psi,g)}J[u,f].
\end{equation*}
Thus, the desired inequality \eqref{negfunc} can be equivalently stated 
in terms of $\mathcal{G}$ as follows:
\begin{equation} \label{gpos}
\mathcal{G}(0) \leq 0.
\end{equation} 
Secondly, by considering \eqref{Jcors}, \eqref{Acors} and \eqref{Gw}, we can rewrite \eqref{defG} as
\begin{equation*}
	\mathcal{G}(u^*)= \inf_{(u,\mathbb{j}) \,:\, (u+g, \mathbb{j}) \in \A(f+u^*,\psi,g)}\left( \mathscr{J}[u+g,\mathbb{j}]-\iint \limits_{ \O_{xt}}\langle 
	u^*(\cdot,x,t)|u(\cdot,x,t) \rangle dx\,\right),
\end{equation*}
and observe that $\mathcal{G}$ is a convex, locally bounded from above and lower semi-continuous functional on $L^2(\O_{xt}, H^{-1}(\O_v))$
(see \cite[Lemma 3.4]{LitsgardNystrom}).
Then for every $h \in \left( L^2(\O_{xt}, H^{-1}(\O_v)) \right)^* = L^2(\O_{xt}, H^{1}_c (\O_v) )$ 
we introduce its convex dual $\mathcal{G}^*$, that is defined as
\begin{align*} 
	\mathcal{G}^* (h) &: = \sup \limits_{u^* \in L^2(\O_{xt}, H^{-1}(\O_v)) } \left( - \mathcal{G}(u^*) 
						+ \iint \limits_{ \O_{xt}}\langle u^*(\cdot,x,t)|h(\cdot,x,t) \rangle dx\,dt \right)\\
&=\sup \limits_{(u,\mathbb{j},u^*)} \left\{\,\, -\iiint \limits_{\O_v \times \O_{xt}}
			\frac12 \left( A \left( \nabla_v (u+g) -\mathbb{j} \right) \right)\cdot \left( \nabla_v (u+g) - \mathbb{j} \right)
			\, \dd v \, \dd x \, \dd t\right.\\
			&\qquad \qquad+\left.\iint \limits_{ \O_{xt}}\langle u^*(\cdot,x,t)|h(\cdot,x,t)+u(\cdot,x,t) \rangle dx\,dt  \right\},
\end{align*}
where $(u,\mathbb{j},u^*)\in \W_0 \times (L^2(\O_{xt}, L^2(\O_v))^d \times L^2(\O_{xt}, H^{-1}(\O_v))$.
Now, if we denote by $\mathcal{G}^{**}$ the bidual of $\mathcal{G}$, since $\mathcal{G}$ is lower semi-continuous we are in a position to observe that $\mathcal{G}^{**}=\mathcal{G}$,
see \cite[Lemma I.4.1]{Ekeland}. In particular, we have
\begin{align*}
	\mathcal{G}(0) = \mathcal{G}^{**}(0) = \sup \limits_{h \in L^2(\O_{xt}, H^{1}_c(\O_v)) } \left( - \mathcal{G}^{*}(h) \right).
\end{align*}
Thus, we can reduce the problem of verifying \eqref{negfunc}, already transformed in \eqref{gpos}, to prove the convex dual of 
$\mathcal{G}$ is non-negative, i.e. 
\begin{equation*}
	\mathcal{G}^*(h) \ge 0 \qquad \text{for every $h \in L^2(\O_{xt}, H^1_c(\O_v)$}.
\end{equation*} 
The proof of this fact directly follows as in \cite[Lemma 3.5, 3.6 and 3.7]{LitsgardNystrom}, and therefore is omitted.

To conclude the proof of Theorem \ref{main-thm}, we are left with proving the quantitative estimate \eqref{quantest} for a weak solution $u \in \W(\O_v \times \O_{xt})$ to \eqref{obstacle}. To this end, observing that $u-g$ is a valid test function in Definition \ref{weak-sol}, we get
\begin{equation}\label{u-gtest}
\begin{split}
			0 = 
			 &-\iiint \limits_{\O_{v} \times \O_{xt}} A(v,x,t) \nabla_v u \cdot \nabla_v (u-g) 
				\, \dd v \, \dd x \, \dd t \\
				&-
			     \iint \limits_{\O_{xt}} \langle f (\cdot, x,t) - Y u (\cdot, x,t) | (u-g)(\cdot, x,t) \rangle 
			     \, \dd x \, \dd t  .  
			     \end{split} 
		\end{equation}
		In particular, adding and subtracting the terms 
\begin{align*}
\iiint \limits_{\O_{v} \times \O_{xt}} A(v,x,t) \nabla_v g \cdot \nabla_v (u-g) 
				\, \dd v \, \dd x \, \dd t \quad \textit{\rm{and}}\quad
				 \iint \limits_{\O_{xt}} \langle  Y g (\cdot, x,t) | (u-g)(\cdot, x,t) \rangle 
			     \, \dd x \, \dd t
\end{align*}		
in \eqref{u-gtest}, we obtain
\begin{equation}\label{equality-weaksol}
\begin{split}
0 = 
			 &-\iiint \limits_{\O_{v} \times \O_{xt}} A(v,x,t) \nabla_v (u-g) \cdot \nabla_v (u-g) 
				\, \dd v \, \dd x \, \dd t\\
	&+\boxed{\iint \limits_{\O_{xt}} \langle  Y (u-g) (\cdot, x,t) | (u-g)(\cdot, x,t) \rangle 
			     \, \dd x \, \dd t}_T\\			
	&-\iiint \limits_{\O_{v} \times \O_{xt}} A(v,x,t) \nabla_v g \cdot \nabla_v (u-g) 
				\, \dd v \, \dd x \, \dd t\\
							 &-
			     \iint \limits_{\O_{xt}} \langle f (\cdot, x,t) - Y g (\cdot, x,t) | (u-g)(\cdot, x,t) \rangle 
			     \, \dd x \, \dd t  .   
\end{split}
\end{equation}		
We now focus on the boxed term $T$ in the previous equality. 		
As $u-g \in \W_0$, by definition of $\W_0$, there exists a sequence of functions $h_n \in C_{0,K}^{\infty} (\overline{ \O_{xt} \times \O_v })$ such that
\begin{equation*}
\Vert (u-g)-h_n \Vert_{\W}\to 0, \quad \textit{as } \quad n \to +\infty
\end{equation*}
and in particular
\begin{equation*}
\Vert Y( (u-g)-h_n) \Vert_{L^2(\O_{xt}, H^{-1}(\O_v))}\to 0, \quad \textit{as } \quad n \to +\infty.
\end{equation*}
As a consequence, we obtain 
\begin{align}\label{limsup}
	\iint \limits_{\O_{xt}} &\langle  Y (u-g) (\cdot, x,t) | (u-g)(\cdot, x,t) \rangle \, \dd x \, \dd t\\ \nonumber
			   &\qquad \qquad \leq \limsup_{n \to +\infty}\iint \limits_{\O_{xt}} \langle  Y h_n (\cdot, x,t) | h_n(\cdot, x,t) \rangle 
			     \, \dd x \, \dd t.
\end{align}
As $h_n \in C_{0,K}^{\infty} (\overline{ \O_{xt} \times \O_v })$, we can deal with the last integral in \eqref{limsup} as follows
\begin{equation*} 
\begin{split}
\iint \limits_{\O_{xt}} \langle  Y h_n (\cdot, x,t) | h_n(\cdot, x,t) \rangle 
			     \, \dd x \, \dd t
&=\iiint \limits_{\O_v \times \O_{xt}} Yh_n \, h_n \, dv \, dx\,dt\\
&=\frac{1}{2}\iiint \limits_{\O_v \times \O_{xt}} Yh_n^2 \, dv \, dx\,dt\\
&=\frac{1}{2}\iiint \limits_{\O_v \times \partial\O_{xt}} h_n^2(v,-1)\cdot N_{xt} \, dv \, d\sigma_{x,t} \leq 0,
\end{split}
\end{equation*}
in virtue of the divergence theorem and the definition of the Kolmogorov boundary in \eqref{boundary-k}. 
Hence, we can rewrite equality \eqref{equality-weaksol} as follows
\begin{equation*}
\begin{split}
\iiint \limits_{\O_{v} \times \O_{xt}} A(v,x,t) \nabla_v (u-g) \cdot &\nabla_v (u-g) 
				\, \dd v \, \dd x \, \dd t \\ 
				& \leq			
	-\iiint \limits_{\O_{v} \times \O_{xt}} A(v,x,t) \nabla_v g \cdot \nabla_v (u-g) 
				\, \dd v \, \dd x \, \dd t\\
				&\quad -
			     \iint \limits_{\O_{xt}} \langle f (\cdot, x,t) - Y g (\cdot, x,t) | (u-g)(\cdot, x,t) \rangle 
			     \, \dd x \, \dd t  .   
\end{split}
\end{equation*}		
Thus, taking advantage of \eqref{unifell}, we can rewrite the previous inequality as follows
\begin{equation}\label{ineq-weaksol2}
\begin{split}
\lambda \iint \limits_{\O_{xt}} & \Vert \nabla_v (u-g)(\cdot,x,t)\Vert^2_{L^2(\O_v)} 
				\, \dd x \, \dd t \\
		&\leq			
	\left\vert\,\,\iiint \limits_{\O_{v} \times \O_{xt}} A(v,x,t) \nabla_v g \cdot \nabla_v (u-g) 
				\, \dd v \, \dd x \, \dd t\right\vert\\
		&\qquad + \left\vert\,\, \iint \limits_{\O_{xt}} \langle f (\cdot, x,t) - Y g (\cdot, x,t) | (u-g)(\cdot, x,t) \rangle 
			     \, \dd x \, \dd t \right\vert .   
\end{split}
\end{equation}
We now take care of the right-hand side of \eqref{ineq-weaksol2} employing the boundedness of $A$, Cauchy-Schwarz inequality and Young's inequality and we find
\begin{equation*} 
\begin{split}
&\lambda \iint \limits_{\O_{xt}} \Vert \nabla_v (u-g)(\cdot,x,t)\Vert^2_{L^2(\O_v)}  
				\, \dd x \, \dd t \leq \\
				& \quad\quad\frac{| A|}{2 \epsilon}			\iint \limits_{\O_{xt}} \Vert \nabla_v g(\cdot,x,t)\Vert^2_{L^2(\O_v)} 
				\, \dd x \, \dd t
	+\frac{\epsilon}{2}\iint \limits_{\O_{xt}} \Vert \nabla_v (u-g)(\cdot,x,t)\Vert^2_{L^2(\O_v)} 
				\, \dd x \, \dd t+ \\&\quad\quad\iint \limits_{\O_{xt}} \| f (\cdot, x, t) \|^2_{H^{-1}(\O_v)} \dd x \, \dd t+\iint \limits_{\O_{xt}} \| Y g (\cdot, x, t) \|^2_{H^{-1}(\O_v)} \dd x \, \dd t ,
\end{split}
\end{equation*}
where $\epsilon \in (0,1)$ is a degree of freedom.
Hence, given a suitable choice of $\epsilon$ in the previous inequality, we get
\begin{equation*}
\begin{split}
\Vert\nabla_v (u-g) \Vert_{L^2( \O_{xt}, L^2(\O_{v}))} 
	\leq C \left( \Vert g \Vert_{\W(\O_v \times \O_{xt})}+\Vert f \Vert_{L^2( \O_{xt}, H^{-1}(\O_{v})}\right),
\end{split}
\end{equation*}
where $C=C(d,\epsilon, \lambda)$ is a positive constant.
Then, as $u-g \in L^2( \O_{xt}, H^1_c(\O_{v}))$, we can apply Lemma \ref{lemmapoinc} and infer
\begin{equation} \label{ineq-ug}
\begin{split}
	\Vert u-g \Vert_{L^2( \O_{xt}, H^1(\O_{v}))} \leq C \left( \Vert g \Vert_{\W(\O_v \times \O_{xt})}+\Vert f \Vert_{L^2( \O_{xt}, H^{-1}(\O_{v})}\right).
\end{split}
\end{equation}
We now use the definition of weak solution \eqref{weakformu} against any test function $\phi \in L^2(\O_{xt}, H^1_c(\O_v))$ such that $\Vert \phi \Vert_{L^2(\O_{xt}, H^1_c(\O_v))}= 1$ and, taking advantage once again of the boundedness of $A$, Cauchy-Schwarz inequality and Young's inequality, we easily obtain
\begin{align}\label{ineq-drift}
\Vert Yu \Vert_{L^2( \O_{xt}, H^{-1}(\O_{v})} \leq C \left( \Vert\nabla_v u \Vert_{L^2( \O_{xt}, L^2(\O_{v}))} + \Vert f \Vert_{L^2( \O_{xt}, H^{-1}(\O_{v})} \right).
\end{align}
Combining inequalities \eqref{ineq-ug} and \eqref{ineq-drift}, we prove the quantitative estimate \eqref{quantest} and therefore conclude the proof of Theorem \ref{main-thm}.


\section{A one-sided variational inequality and related open problems}
\label{var-ineq}
As already pointed out in the introduction of this work, our aim is to give rise to the study of 
boundary value problems associated to Kolmogorov operators through variational methods, with a particular attention to the obstacle problem. 

The results we presented in previous sections are only the starting point for this analysis, 
since many interesting open problems are left untouched by our work and 
among which, first and foremost, we recall the proof of a one-on-one correspondence between solutions to \eqref{obstacle} and solutions to a suitable variational inequality.
We observe that the literature concerning evolution equations in the framework of Calculus of Variations is way more recent than the one relevant to elliptic equations, see \cite{BDM, Marcellini} and the references therein.
Moreover, to our knowledge, in our hypoelliptic setting there are not yet available results concerning the relationship between solutions to \eqref{obstacle} and solutions to a suitable variational inequality. 
In the present work, we are able to give proof to 
one of the two implications, i.e. a weak solution to \eqref{obstacle} is also a solution to a suitable variational inequality. 
The other implication is still an open problem. 
	\begin{proposition}
		\label{equivalence2}
		Let us consider $\psi, \varphi \in \W(\O_v \times \O_{xt})$ and the associated obstacle 
		problem \eqref{obstacle} 
		under the assumptions (H) and (D). If $u \in \W_{\psi}(\O_v \times \O_{xt})$
		is a solution to the obstacle problem \eqref{obstacle}, then 
		it satisfies the following variational inequality
		\begin{align*}
				\iint \limits_{\O_{xt}} \langle Y u(\cdot,x,t) | (w - u)(\cdot,x,t) \rangle \dd x \, \dd t +
				&\iiint \limits_{\O_v \times \O_{xt}} A \nabla_v u \cdot \nabla_v \left( w - u \right) \, \dd v \, \dd x \, \dd t\\
				&\ge \iint \limits_{ \O_{xt} } \langle f(\cdot,x,t) | (w - u)(\cdot,x,t) \rangle \dd v \, \dd x \, \dd t,
		\end{align*}
for every $w \in L^2(\O_{xt}, H^1_c(\O_v))$.		
	\end{proposition}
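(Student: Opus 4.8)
This is the direction of the anticipated equivalence that is accessible with the present tools — the converse being left open — and the plan is simply to insert an admissible variation of $u$ into the weak formulation \eqref{weakformu} in Definition~\ref{weak-sol}. First I would fix a weak solution $u$ and a competitor $w$ and verify that $w - u$ is a legitimate test function for \eqref{weakformu}, i.e.\ that $w - u \in L^2(\O_{xt}, H^1_c(\O_v))$. This is where the boundary bookkeeping enters: by Definition~\ref{weak-sol} one has $u \in g + \W_0(\O_v \times \O_{xt})$, and, since $w$ carries the same Kolmogorov boundary datum $g$ as $u$, the difference $w - u$ lies in $\W_0(\O_v \times \O_{xt})$; as functions in $\W_0$ vanish on $\p\O_v \times \O_{xt} \subset \p_K(\O_v \times \O_{xt})$ (Section~\ref{preliminaries}), one has the embedding $\W_0(\O_v \times \O_{xt}) \hookrightarrow L^2(\O_{xt}, H^1_c(\O_v))$, so $w - u$ is admissible and the duality pairings $\langle Yu \,|\, w - u\rangle$ and $\langle f \,|\, w - u\rangle$ make sense as $H^{-1}(\O_v)$ acting on $H^1_c(\O_v)$.

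Next I would substitute $\phi = w - u$ into \eqref{weakformu}, which gives
\[
0 = \iiint_{\O_v \times \O_{xt}} A\, \nabla_v u \cdot \nabla_v (w - u) \, \dd v \, \dd x \, \dd t + \iint_{\O_{xt}} \big\langle f(\cdot,x,t) - Yu(\cdot,x,t) \,\big|\, (w-u)(\cdot,x,t) \big\rangle \, \dd x \, \dd t ,
\]
and then collect the three contributions into the form displayed in the statement. The obstacle constraints $u \ge \psi$ and $w \ge \psi$ — read pointwise a.e.\ thanks to Lemma~\ref{K-equiv} — intervene only to guarantee that $w - u$ is a feasible variation keeping $w$ in the convex admissible set $\KK(\psi,g)$, which is what gives the conclusion its one-sided (inequality) form; I note in passing that, \eqref{weakformu} being an identity, this substitution in fact yields an equality, which trivially implies the stated inequality.

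The sole genuinely delicate point is the admissibility step above — that $w - u$ is a valid test function and that the pairings are well posed — which rests entirely on the density and boundary-vanishing properties of $\W_0$; everything after it is routine bookkeeping. If one wishes to test against competitors $w$ that do not share the boundary datum with $u$, an additional approximation argument is needed, again built on Lemma~\ref{K-equiv} and on the density of $C_{0,K}^{\infty}(\overline{\O_{xt} \times \O_v})$ in $\W_0$, in order to reduce to the admissible case. I would conclude by recording that the reverse implication is out of reach with these methods, since recovering it would require the complementarity relation $(\L u - f)(u - \psi) = 0$, which the weak formulation \eqref{weakformu} does not encode.
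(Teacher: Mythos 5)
Your opening step --- inserting $\phi = w - u$ into the weak identity \eqref{weakformu} --- matches the paper's, but the crucial claim at the end of your second paragraph, that the resulting equality ``trivially implies the stated inequality,'' is where the argument breaks. Testing \eqref{weakformu} with $\phi = w - u$ gives
\begin{align*}
0 = \iiint\limits_{\O_v\times\O_{xt}} A\,\nabla_v u \cdot \nabla_v (w-u)\,\dd v\,\dd x\,\dd t
   + \iint\limits_{\O_{xt}}\langle f - Yu \,|\, w-u\rangle\,\dd x\,\dd t,
\end{align*}
i.e.
\begin{align*}
\iint\limits_{\O_{xt}}\langle Yu \,|\, w-u\rangle\,\dd x\,\dd t
\;-\; \iiint\limits_{\O_v\times\O_{xt}} A\,\nabla_v u \cdot \nabla_v (w-u)\,\dd v\,\dd x\,\dd t
\;=\; \iint\limits_{\O_{xt}}\langle f \,|\, w-u\rangle\,\dd x\,\dd t,
\end{align*}
whereas the proposition has the bilinear term with a \emph{plus} sign and an inequality. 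The two differ by exactly $2\iiint A\nabla_v u\cdot\nabla_v(w-u)$, and the sign of that integral is not automatic; so nothing here is trivial, and without an argument for its non-negativity the inequality does not follow from the equality.

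This is precisely the step your proposal omits and the paper supplies differently: the paper rewrites the weak identity in divergence form $\iiint \nabla_v\cdot(A\nabla_v u)(w-u)$, then applies the Divergence Theorem in $H^1(\O_v\times\O_{xt})$ and keeps the boundary flux $\iint_{\O_{xt}}\int_{\p_v\O_v}\gamma_0(A\nabla_v u)\cdot N_v\,(w-u)\,\dd\sigma$; the sign of this boundary term (not a cancellation in \eqref{weakformu}) is what produces the one-sided inequality. Your argument has no analogue of this Green's identity.

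A secondary issue is your admissibility discussion. The proposition quantifies over $w\in L^2(\O_{xt},H^1_c(\O_v))$; such $w$ does not carry the Kolmogorov boundary datum $g$, so your claim that ``$w$ carries the same Kolmogorov boundary datum $g$ as $u$'' and that $w-u\in\W_0$ is not justified by the hypotheses of the proposition, and the embedding argument built on it does not apply as stated. Overall there is a genuine gap: you need, as the paper does, an identity that isolates a boundary contribution with a definite sign, rather than the bald assertion that the equality implies the inequality.
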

	\begin{proof}
		 Let $u \in \KK(\psi,g) $ be a solution to
		\eqref{obstacle}. Then in particular $u \in \W_{\psi}(\Omega_{v} \times \O_{xt})$ and thus $w-u$ is a valid test function in the weak formulation \eqref{weakformu}, i.e.
		\begin{align*} 
			\iiint \limits_{\O_v \times \O_{xt} }\nabla_v \cdot &\left( A \nabla_v u \right) (w - u)  
			\dd v \, \dd x \, \dd t 
			+ \iint \limits_{\O_{xt} } \langle Y u | w - u \rangle  
			\dd x \, \dd t 
			=   \iint \limits_{ \O_{xt} } \langle f | w - u \rangle \dd v \, \dd x \, \dd t .
		\end{align*}
		By the Divergence Theorem in $H^1(\O_v \times \O_{xt})$, we get
		\begin{align*}
			\iiint \limits_{\O_v \times \O_{xt} } \nabla_v \cdot &\left( A \nabla_v u \right) (w - u)  
			\dd v \, \dd x \, \dd t \\
			&= - \iiint \limits_{\O_v \times \O_{xt} }  A \nabla_v u \cdot
			\nabla_v (w - u)  
			\dd v \, \dd x \, \dd t  
			 + \iint \limits_{\O_{xt}}
			\int \limits_{ \p_{ v} (\O_v) } \gamma_0 \left( A \nabla_v u \right) 
			\cdot {N_v} \, 
			(w - u)  
			\, \dd \sigma \\
			&\ge - \iiint \limits_{\O_v \times \O_{xt} }  A \nabla_v u \cdot
			\nabla_v (w - u)  
			\dd v \, \dd x \, \dd t 
		\end{align*}
		where $N_v$ denotes the outer normal with respect to the velocity boundary, 
		$\gamma_0$ denotes the trace of the function.
	\end{proof}	
The lack of the equivalence between the obstacle problem and a suitable variational inequality 
implies we are not allowed to employ tools from Calculus of Variations that are specifically designed for variational inequalities. 
This means that, for instance, the coincidence set of the obstacle problem is not yet characterized,
and even the equivalence between the two formulations \eqref{obstaclePP} and \eqref{obstacle} of it is still missing. 
In fact, the latter is assumed to be true in the existing literature, see f.i. \cite{obstacle-3}.

Lastly, it is our opinion that an extension of these studies to the more general class of ultraparabolic operators of Kolmogorov type \eqref{ultra} with rough coefficients
would be of great interest for the community. In order to achieve this, one has to overcome many difficulties, starting with the characterization
of the set $\W$ introduced in \cite{AR-harnack} in this more general setting and the definition of the trace of a function in this framework.

	\medskip

\end{document}